\def\@begintheorem#1#2{\par\bgroup{\sc #1 \ #2. }  \it \\\ignorespace }
\def\@opargbegintheorem#1#2#3{\par\bgroup{\sc #1\ #2 \ (#3).}  \it  \ignorespace}
\def\@endtheorem{\egroup}
\theoremstyle{plain}
\newtheorem{theorem}{Theorem}[section]
\theoremstyle{definition}
\theoremstyle{remark}
\newtheorem{remark}[theorem]{Remark}
\theoremstyle{plain}
\newtheorem{lemma}[theorem]{Lemma}
\theoremstyle{plain}
\newtheorem{corollary}[theorem]{Corollary}
\theoremstyle{plain}
\theoremstyle{plain}
\newtheorem{question}[theorem]{Question}
\newcommand{\C}{{\mathbb C}}
\newcommand{\BB}{{\mathcal{B}}}
\newcommand{\R}{{\mathbb R}}
\newcommand{\Vep}{\mathbf{V}_{\!\epsilon}}
\newcommand{\Rt}{{\R}^3}
\newcommand{\pO}{\partial\Omega}
\newcommand{\stwo}{\mathbb{S}^2}
\DeclarePairedDelimiter{\seq}{\lbrace}{\rbrace}
\newcommand{\hx}{\hat{x}}
\newcommand{\J}{\text{J}}
\newcommand{\Y}{\text{Y}}
\newcommand{\II}{\text{I}}
\newcommand{\KK}{\text{K}}
\begin{document}
\title[The relativistic spherical $\delta$-shell interaction in $\R^3$]{The relativistic spherical $\delta$-shell interaction in $\mathbb{R}^3$: spectrum and approximation}
\author{Albert Mas and Fabio Pizzichillo}
\subjclass[2010]{Primary 81Q10, Secondary 35Q40.}
\keywords{Dirac operator, self-adjoint extension, spherical $\delta$-shell interaction, singular integral, approximation by scaled regular potentials.}
\address{A.\! Mas, Departament de Matem\`atiques i Inform\`atica,
Universitat de Barcelona. Gran Via Corts Catalanes 585,
08007 Barcelona (Spain).}
\email{albert.mas@ub.edu}
\address{F.\! Pizzichillo, BCAM - Basque Center for Applied Mathematics.
Alameda de Mazarredo 14, 48009 Bilbao (Spain).}
\email{fpizzichillo@bcamath.org}
\date{\today}

\begin{abstract}
This note revolves on the free Dirac operator in $\Rt$ and its $\delta$-shell interaction with electrostatic potentials supported on a sphere. On one hand, we characterize the eigenstates of those couplings by finding sharp constants and minimizers of some precise inequalities related to an uncertainty principle. On the other hand, we prove that the domains given by Dittrich, Exner and \v{S}eba 
{\em [Dirac operators with a spherically symmetric $\delta$-shell interaction, J. Math. Phys. 30.12 (1989), 2875-2882]} and by Arrizabalaga, Mas and Vega 
{\em [Shell interactions for Dirac operators, J. Math. Pures et Appl. 102.4 (2014), 617-639]} for the realization of an electrostatic  spherical shell interaction coincide. Finally, we explore the spectral relation between the shell interaction and its approximation by short range potentials with shrinking support, improving previous results in the spherical case.
\end{abstract}

\maketitle

\section{Introduction}
Since 1929, when Paul Dirac formulated it, the ``Dirac equation'' has played a fundamental role in quantum mechanics attracting the attentions of both mathematicians and physicists. 
The newness of Dirac's approach comes from the fact that he could describe the behaviour of the particles by using a $4$-component spinor. With this approach one can observe the existence of negative-energy particles, see \cite{thaller} and the references therein. 
By the way, it is not clear whether one should interpret the Dirac equation as a quantum mechanical evolution equation, like the Schr\"odinger equation for a single particle. 
An example that highlights this difference is, for instance, the comparison between the Schr\"odinger operator and the Dirac operator coupled with a singular potential supported on a hyper-surface: the so-called $\delta$ interaction.

The idea of coupling Hamiltonians with singular potentials supported on subsets of lower dimension with respect to the ambient space is quite classic in quantum mechanics.
For the Schr\"odinger operator, this problem is described in the monograph \cite{albeverio2012solvable} for finite and infinite $\delta$-point interactions and in \cite{exner2007leaky} for singular potentials supported on hyper-surfaces.
Regarding the Dirac operator, in the $1$-dimensional case the problem is well-understood. Thanks to \cite{albeverio2012solvable, posilicanoboundary, gesztesy1987new} we get the description of the domain, some properties of the spectrum and a resolvent formula. In three dimensions the first result is \cite{ditexnseb}. By using the decomposition into spherical harmonics, they can reduce their analysis to a $1$-dimensional question and they can construct the domain of the Dirac operator coupled with a singular potential supported on the sphere.
In the case of a general surface $\Sigma$, the first work is \cite{amv1}. In this work they characterize the domain of the $\delta$-shell Dirac operator with coupling constant $\lambda\neq\pm 2$, by the interactions between certain functions $u \in H^1(\Rt)$ and $g \in L^2(\Sigma)$.
Comparing this work with the general abstract theory given in \cite{posilicano1}, one could suppose that this kind of interaction is forcing $g$ to be in $H^{1/2}(\Sigma)$. Indeed, recently, in \cite{ourmieres2016strategy} they proved that this conjecture is true. Moreover they also define the domain of  $\delta$-shell Dirac operator when the coupling constant $\lambda = \pm 2$. Finally, in \cite{behrndt2016dirac, behrndt2016spectral} they can define the domain of the $\delta$-shell Dirac operator by using the abstract theory of boundary triples.

Nevertheless, one has to keep in mind that, even if this kind of model is easier to be mathematically understood, it is and ideal model that cannot be physically reproduced. This is the reason why it is interesting to approximate this kind of operators by more regular ones. And the results we obtain show the difference between the Schr\"odinger operator and the Dirac operator. For instance, in one dimension, if $V\in C^\infty_c(\R)$ then 
\begin{equation*}
V_\epsilon(t):=\textstyle{\frac{1}{\epsilon}\,V\big(\frac{t}{\epsilon}\big)
\to(\int V)}\delta_0\quad\text{when }\epsilon\to0
\end{equation*}
in the sense of distributions, where $\delta_0$ denotes the Dirac measure at the origin. 
In \cite{albeverio2012solvable} it is proved that  
$\Delta+V_\epsilon\to\Delta+(\int V)\delta_0$ in the norm resolvent sense when $\epsilon\to0$, and in \cite{approximation} this result is generalized to higher dimensions for singular perturbations on general smooth hyper-surfaces.

These kind of results do not hold for the Dirac operator and it is physically known as ``Klein's paradox''. In fact, in \cite{sebaklein} it is proved that, in the $1$-dimensional case,  the convergence holds in the norm resolvent sense but the coupling constant does depend non-linearly on the potential $V$, unlike in the case of Schr\"odinger operators.
Though, in the $3$-dimensional case, the convergence holds in the strong resolvent sense for bounded smooth hyper-surfaces under certain hypothesis over the potential $V$, see \cite{mp}. 

This note revolves on the free Dirac operator in $\Rt$ and its $\delta$-shell interactions with singular electrostatic potentials supported on a sphere. On one hand, we answer an open question posed in \cite{amv2} which provides eigenstates of those couplings by finding sharp constants and minimizers of some precise inequalities related to an uncertainty principle (see Question \ref{ques}, Theorem \ref{thm: positive answer} and Corollary \ref{coro: minimizers}). On the other hand, we prove that the domains given in \cite{ditexnseb} and \cite{amv1} coincide in the spherical case and that the conjecture that comes from the comparison to \cite{posilicano1} holds (see Theorem \ref{theo equal extensions} and Remark \ref{rem: H1/2}). Moreover we explore the spectral relation between the electrostatic $\delta$-shell interaction and its approximation by the coupling of the free Dirac operator with shrinking short range potentials, improving the results in \cite{mp} in the case of a spherical shell interaction (see Theorem \ref{thm: aprox-limit}).

\section{Preliminaries}
In this section we review some notation and basic rudiments on the construction of the electrostatic $\delta$-shell interactions mentioned in the introduction. We first recall the approach presented in \cite{amv1} for interactions on general smooth bounded domains and then we review the one from \cite{ditexnseb} in the case of a spherical interaction. At the end of the section we prove that both self-adjoint realizations coincide in the spherical case whenever the strength of the interaction differs from a concrete value. 

Given $m\geq0$, the free Dirac operator in $\R^3$ is defined by 
\begin{equation*}
H=-i\alpha\cdot\nabla+m\beta, 
\end{equation*}
where $\alpha=(\alpha_1,\alpha_2,\alpha_3)$,
\begin{equation*}
\alpha_i=\left(
\begin{array}{cc}
0&{\sigma}_i\\
{\sigma}_i&0
\end{array}
\right)
\quad\text{for }i=1,2,3,\quad 
\beta=\left(\begin{array}{cc}
\mathbb{I}_2&0\\
0&-\mathbb{I}_2
\end{array}\right),\quad
\mathbb{I}_2=\left(
\begin{array}{cc}
1 & 0\\
0 & 1
\end{array}\right),
\end{equation*}

\begin{equation*}
\text{and }\quad{\sigma}_1 =\left(
\begin{array}{cc}
0 & 1\\
1 & 0
\end{array}\right),\quad {\sigma}_2=\left(
\begin{array}{cc}
0 & -i\\
i & 0
\end{array}
\right),\quad{\sigma}_3=\left(
\begin{array}{cc}
1 & 0\\
0 & -1
\end{array}\right)
\end{equation*}
is the family of \textit{Pauli's matrices}. Thus $H$ acts on spinors of the form $\varphi:\Rt\to\C^4$. Despite the massless case has its own interest, throughout this article we will assume that $m>0$.
 
Let $\Omega\subset\Rt$ be a bounded smooth domain. We set $\Omega_+=\Omega$ and $\Omega_-=\Rt\setminus \overline{\Omega_+}$. Besides, let $\upsigma$ and $\nu$ denote the surface measure and unit outward (with respect to $\Omega$) normal vector field on $\partial\Omega$, respectively. A fundamental solution of $H$ is given by
\begin{equation*}
\phi (x)=\frac{e^{-m|x|}}{4\pi|x|}
\left(m\beta +\left(1+m|x|\right)\,i\alpha\cdot\frac{x}{|x|^2}\right)\quad \text{for }x\in\Rt\setminus\{0\},
\end{equation*}
see \cite[Lemma 3.1]{amv1}. Given $g\in L^2(\pO)^4$ we introduce the operator
\begin{equation}\label{defi Phia}
\Phi(g)(x)=
\int_{ {\pO}}\phi (x-y)g(y)\,d\upsigma(y)
\quad\text{for }x\in\Rt\setminus\pO.
\end{equation}
By \cite[Corollary 2.3]{amv1}, $\Phi: L^2(\pO)^4\to L^2(\Rt)^4$ is linear and bounded.
For $x\in\partial\Omega$ we also define
\begin{equation}\label{eq: Csigma def}
C_\upsigma g(x)
=\lim_{\epsilon\searrow 0}\int_{\pO\cap\{|x-y|>\epsilon\}}\phi (x-y) g(y)\,d\upsigma(y)
\quad\text{and}\quad C_{\pm}g(x)=\lim_{\Omega_\pm\ni y \overset{nt}{\to}x}\Phi(g)(y),
\end{equation}
where $\Omega_\pm\ni y \overset{nt}{\to}x$ means that $y$ tends to $x$ non-tangentially from $\Omega_{\pm}$, respectively. From \cite[Lemma 3.3]{amv1} we know that both $C_\upsigma$ and $C_\pm$ are linear and bounded in $L^2(\pO)^4$, and the following Plemelj-Sokhotski jump formulae holds:
\begin{equation}\label{Plemelj jump formulae}
C_\pm=\mp \frac{i}{2}(\alpha\cdot\nu)+C_\upsigma.
\end{equation}

Let $H^1(\Rt)^4$ denote the Sobolev space of $\C^4$-valued functions such that all its components have all its zero and first order derivatives in $L^2(\Rt)$. It is well known that the trace operator on $\pO$, initially defined on smooth functions, extends to a bounded operator 
\begin{equation*}
\operatorname{tr}_{\pO}:H^1(\Rt)^4\to H^{1/2}( \pO)^4 \subset L^2(\pO)^4,
\end{equation*}
where $H^{1/2}( \pO)$ denotes the Sobolev-Slobodeckij space on $\pO$ of order $1/2$. 

We are ready to introduce the electrostatic $\delta$-shell interaction $H+\lambda\delta_{\pO}$ studied in \cite{amv1}. Here, $\lambda\in\R$ is a paramenter that represents the strength of the interaction. Following 
\cite[Theorem 3.8]{amv1}, we define
\begin{equation}\label{eq defi electro}
\begin{split}
D(H+\lambda\delta_{\pO})
&=
\{u+\Phi(g): \,u\in H^1(\Rt)^4,\, g \in L^2(\pO)^4,\,
\lambda\operatorname{tr}_{\pO}u
=
-(1+\lambda C_\upsigma)g\},
\\
(H+\lambda\delta_{\pO})\varphi&=
H\varphi+\lambda\frac{\varphi_++\varphi_-}{2}\,\upsigma
\quad
\text{for }\varphi\in D(H+\lambda\delta_{\pO}),
\end{split}
\end{equation}
where $H\varphi$ in the right hand side of the second statement in \eqref{eq defi electro} is understood in the sense of distributions and $\varphi_\pm$ denotes the boundary traces of $\varphi$ when one approaches to $ {\pO}$ from $\Omega_\pm$. 
In particular, one has 
$(H+\lambda\delta_ {\pO})\varphi=H u \in L^2(\Rt)^4$
for all $\varphi=u+\Phi(g)\in D(H+\lambda\delta_ {\pO})$.
Furthermore, $H+\lambda\delta_ {\pO}$ is self-adjoint for all $\lambda\neq \pm 2$. For shortness sake, we put 
\begin{equation*}
H_\lambda=H+\lambda\delta_ {\pO}.
\end{equation*} 
Roughly speaking, to construct the interaction $H_\lambda$ presented in \cite{amv1}, one finds the domain of definiton of the operator looking in the space of distributions and imposing some restrictions to get a big enough domain for the adjoint. However, in \cite{ditexnseb}, the electrostatic $\delta$-shell interactions are obtained using self-adjoint extensions of the restricted operator $H|_{C_c^\infty(\Rt\setminus\pO)^4}$, as we will see below.

Unless we say the contrary, from now on we restrict our study to the case 
\begin{equation*}
\Omega=\{x\in\R^3:\,|x|<1\}.
\end{equation*}
For clarity, let us denote $B_\pm=\Omega_\pm$ and $\stwo=\pO$. We now review the approach from  \cite{ditexnseb}, where the authors construct self-adjoint and rotationally invariant extensions of $H|_{C_c^\infty(\Rt\setminus\stwo)^4}$ by using the decomposition in the classical spherical harmonics $Y^l_n$. The indices refer to  $n = 0, 1, 2, \dots$ and $l =-n,-n + 1,\dots , n,$, and the functions satisfy $\Delta_{\stwo} Y^l_n= n(n + 1)Y^l_n$, where $\Delta_{\mathbb{S}^2}$ denotes the usual spherical laplacian. Moreover, $\{Y^l_n\}_{l,n}$ is a complete orthonormal set in $L^2(\mathbb{S}^2)$.
For $j = 1/2, 3/2, 5/2, \dots  $ and $m_j = -j,-j + 1, \dots , j$ set
\begin{align*}
\psi^{m_j}_{j-1/2}&=
\frac{1}{\sqrt{2j}}
\left(\begin{array}{c}
\sqrt{j+m_j}\,Y^{m_j-1/2}_{j-1/2}\\
\sqrt{j-m_j}\,Y^{m_j+1/2}_{j-1/2}\\
\end{array}\right),
\\
\psi^{m_j}_{j+1/2}&=\frac{1}{\sqrt{2j+2}}
\left(\begin{array}{c}
\sqrt{j+1-m_j}\,Y^{m_j-1/2}_{j+1/2}\\
-\sqrt{j+1+m_j}\,Y^{m_j+1/2}_{j+1/2}\\
\end{array}\right).
\end{align*}
Then $\{\psi^{m_j}_{j\pm 1/2}\}_{j,m_j}$ is a complete orthonormal set in $L^2(\mathbb{S}^2)^2$. Moreover, if we set 
\begin{equation*}
r=|x|,\quad\hat x = x / |x|\quad\text{and}\quad
L=-ix\times\nabla\quad\text{for }x \in \Rt\setminus\{0\},
\end{equation*}
then
\begin{equation*}
(\sigma\cdot\hx)\psi^{m_j}_{j\pm 1/2}=\psi^{m_j}_{j\mp1/2},
\quad\text{and}\quad
(1+\sigma\cdot L)\psi^{m_j}_{j\pm 1/2}=\pm(j+1/2)\psi^{m_j}_{j\pm 1/2},
\end{equation*}
where $\sigma=(\sigma_1,\sigma_2,\sigma_3)$ is the vector of \textit{Pauli's matrices}.
For $k_j=\pm(j+1/2)$ we define
\begin{equation*}
\Phi^+_{m_j,k_j}=
 \left(\begin{array}{c}
i\,\psi^{m_j}_{j\pm 1/2}\\
0
\end{array}\right)
\quad\text{and}\quad
\Phi^-_{m_j,k_j}=
\left(\begin{array}{c}
0\\
\psi^{m_j}_{j\mp1/2}
\end{array}\right).
\end{equation*}
Then, the set $\BB=\{\Phi^+_{m_j,k_j},\Phi^-_{m_j,k_j}\}_{j,k_j,m_j}$ is a 
complete orthonormal base of $L^2(\mathbb{S}^2)^4$. We refer to 
\cite[{Section 4.6}]{thaller} for the details. 

We now introduce the subspaces
\begin{align*}
\displaystyle
\mathcal{C}_{m_j,k_j}
&=\seq*{
\frac{1}{r}\left(
f^+_{m_j,k_j}(r)\Phi^+_{m_j,k_j}(\hx)+
f^-_{m_j,k_j}(r)\Phi^-_{m_j,k_j}(\hx)\right)
\in L^2(\Rt)^4
:\,
f^\pm_{m_j,k_j}\in C^\infty_c(0,+\infty)}.
\end{align*}
From \cite[Theorems 1.1 and 4.14]{thaller} we know that the operator 
\begin{equation*}
\mathring H=H|_{C^\infty_c(\Rt\setminus\seq{0})^4}
\end{equation*}
is essentially self-adjoint and
leaves the partial wave subspace $\mathcal{C}_{m_j,k_j}$ invariant. Its action on each subspace is represented, in terms of the base $\BB$, by the operator  
\begin{equation}\label{eq:dirac.spherical}
{D}( \mathring  h_{m_j,k_j})= C^\infty_c(0,+\infty)^2,
\quad
\mathring h_{m_j,k_j}
\begin{pmatrix}
  f \\
  g
\end{pmatrix}
=\left(
\begin{array}{cc}
m & -\frac{d}{dr}+\frac{k_j}{r}\\
\frac{d}{dr}+\frac{k_j}{r} & -m
\end{array}\right)
\begin{pmatrix}
  f \\
 g
\end{pmatrix}.
\end{equation}
By \cite[Lemma 4.15]{thaller}, $\mathring h_{m_j,k_j}$ is essentially self-adjoint and, if we set $h_{m_j,k_j}=\overline{\mathring h_{m_j,k_j}}$, we get that 
\begin{align*}
%\mathring H&\cong
%\bigoplus_{j=\frac{1}{2},\frac{3}{2},\dots}^\infty\,\bigoplus_{m_j=-j}^j\,
%\bigoplus_{k_j=\pm(j+1/2)}\,
%\mathring{h}_{m_j,k_j}, \text{\textcolor{red}{ (is this line correct? they are $C^\infty_c$ functions, as in what we removed)}}
%\\
 H&\cong
\bigoplus_{j=\frac{1}{2},\frac{3}{2},\dots}^\infty\,\bigoplus_{m_j=-j}^j\,
\bigoplus_{k_j=\pm(j+1/2)}\,
{h}_{m_j,k_j},
\end{align*}
where ``$\cong$'' means that the operators are unitarily equivalent 
and $H$ is the free Dirac operator defined on $H^1(\R^3)$.

For $m_j$ and $k_j$ as above, let us define 
\begin{equation*}
{D}(\hat h_{m_j,k_j})=C^\infty_c\left((0,1)\cup(1,+\infty)\right)^2
\subset {D}(\mathring h_{m_j,k_j}),
\quad
\hat h_{m_j,k_j}\varphi:=\mathring h_{m_j,k_j}\varphi,\
\text{for all}\ \varphi\in{D}(\hat h_{m_j,k_j}).
\end{equation*}
Assume that $\lambda\in\R\setminus\{\pm 2\}$ and set
\begin{equation*}
M^\pm_\lambda=\left( \begin{array}{cc}
\lambda/2&\pm 1\\
\mp 1 & \lambda / 2 
\end{array}\right).
\end{equation*}
In \cite{ditexnseb} it is proved that
the operator ${h}(\lambda)_{m_j,k_j}$ defined by
\begin{equation}\label{def: h(lambda)mj,kj}
\begin{split}
{D}({h}(\lambda)_{m_j,k_j})
&=
\Big\{
\varphi\in L^2(0,+\infty)^2: h_{m_j,k_j}\varphi\in L^2(0,+\infty)^2,\, \varphi\in AC\big((0,1)\cup(1,+\infty)\big)^2,\\
&\hskip105pt M^-_\lambda \varphi(1^+)+M^+_\lambda \varphi(1^-)=0
\Big\},\\
{h}(\lambda)_{m_j,k_j}\varphi&={h}_{m_j,k_j}\varphi\quad\text{for all }\varphi\in{D}({h}(\lambda)_{m_j,k_j})
\end{split}
\end{equation}
is a self-adjoint extension of $\hat h_{m_j,k_j}$. Here, $AC\big((0,1)\cup(1,+\infty)\big)$ denotes the space of absolutely continuous functions on the open set $(0,1)\cup(1,+\infty)$. Furthermore, if one sets 
\begin{equation*}
\delta_1(\varphi)=\frac{\varphi(1^+)+\varphi(1^-)}{2},
\end{equation*}
then ${h}(\lambda)_{m_j,k_j}=\mathring  h_{m_j,k_j}+\lambda\delta_1$ on 
$D({h}(\lambda)_{m_j,k_j})$, with the understanding that here $\mathring  h_{m_j,k_j}$ just means the differential operator given by the matrix on the right hand side of \eqref{eq:dirac.spherical} acting in the sense of distributions.
Let us finally introduce the subspaces
\begin{equation*}
\mathcal{H}(\lambda)_{m_j,k_j}=
\seq*{ 
\frac{1}{r}\Big(
f^+_{m_j,k_j}(r)\Phi^+_{m_j,k_j}(\hx)+
f^-_{m_j,k_j}(r)\Phi^-_{m_j,k_j}(\hx)\Big)\!
\in L^2(\Rt)^4:\,
f^\pm_{m_j,k_j}\!\!\in {D}({h}(\lambda)_{m_j,k_j})}. 
\end{equation*}
The electrostatic $\delta$-shell interaction with strength $\lambda$ studied in \cite{ditexnseb} is given by 
\begin{equation}\label{eq:def.delta.shell.dittexn}
\begin{split}
{D}(\widehat{H}(\lambda))&=
\bigoplus_{j=\frac{1}{2},\frac{3}{2},\dots}^\infty\,\bigoplus_{m_j=-j}^j\,
\bigoplus_{k_j=\pm(j+1/2)}\,
\mathcal{H}(\lambda)_{m_j,k_j},
\\
\widehat{H}(\lambda)
&\cong
\bigoplus_{j=\frac{1}{2},\frac{3}{2},\dots}^\infty\,\bigoplus_{m_j=-j}^j\,
\bigoplus_{k_j=\pm(j+1/2)}\,
h(\lambda)_{m_j,k_j},
\end{split}
\end{equation}
which is a self-adjoint operator.

In order to compare the notions of a $\delta$-shell interaction given by \eqref{eq defi electro} in the spherical case and 
\eqref{eq:def.delta.shell.dittexn}, let us first prove an auxiliary result.

\begin{lemma}\label{lem:f=u+phi(x)g}
Let $\Omega\subset \Rt$ be a bounded domain of class $C^2$. Then
\begin{equation*}
H^1(\R^3\setminus {\pO})^4=\seq{u+\Phi(g): u \in H^1(\R^3)^4,\, g\in H^{1/2}( \pO)^4}.
\end{equation*}
\end{lemma}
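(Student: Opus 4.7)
The plan is to prove the two inclusions separately, with the Plemelj--Sokhotski jump formula \eqref{Plemelj jump formulae} providing the bridge between the boundary traces of a function in $H^1(\R^3\setminus\pO)^4$ and the density $g$ we must construct. The identification $H^1(\R^3\setminus\pO)^4 \cong H^1(\Omega_+)^4\oplus H^1(\Omega_-)^4$ will be used implicitly throughout.

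For the inclusion ``$\supseteq$'', any $u\in H^1(\R^3)^4$ obviously lies in $H^1(\R^3\setminus\pO)^4$, so the nontrivial point is that $\Phi(g)\in H^1(\R^3\setminus\pO)^4$ whenever $g\in H^{1/2}(\pO)^4$. I would obtain this via the factorization $\phi = H G_m$, where $G_m(x)=e^{-m|x|}/(4\pi|x|)$ is the fundamental solution of $-\Delta+m^2$, so that $\Phi(g) = H(Sg)$ with $S$ the associated Helmholtz single layer potential on $\pO$. On a $C^2$ boundary, classical layer potential theory provides the continuous mapping $S: H^{1/2}(\pO)^4 \to H^2(\Omega_\pm)^4$; applying the first-order operator $H$ then gives $\Phi(g)\in H^1(\Omega_\pm)^4$ as required (the exponential decay of $G_m$ at infinity handles the unbounded component).

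For the reverse inclusion ``$\subseteq$'', given $f\in H^1(\R^3\setminus\pO)^4$ with one-sided traces $f_\pm \in H^{1/2}(\pO)^4$, the natural guess is dictated by the jump formula: set
\begin{equation*}
g := i(\alpha\cdot\nu)(f_+ - f_-).
\end{equation*}
Because $\Omega$ is $C^2$ one has $\nu\in C^1(\pO)^3$, and multiplication by a $C^1$ matrix preserves $H^{1/2}(\pO)$, so $g\in H^{1/2}(\pO)^4$. Using \eqref{Plemelj jump formulae} and the Clifford relation $(\alpha\cdot\nu)^2=\mathbb{I}_4$ one computes
\begin{equation*}
\Phi(g)_+ - \Phi(g)_- = (C_+ - C_-)g = -i(\alpha\cdot\nu)g = (\alpha\cdot\nu)^2 (f_+ - f_-) = f_+ - f_-.
\end{equation*}
Hence $u := f - \Phi(g)$ has matching traces from $\Omega_+$ and $\Omega_-$; together with $u|_{\Omega_\pm}\in H^1(\Omega_\pm)^4$, the standard $H^1$ gluing criterion places $u$ in $H^1(\R^3)^4$, giving the decomposition $f = u + \Phi(g)$.

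The only real technical obstacle is the layer potential regularity in the first inclusion, since all the algebra on the boundary is trivially controlled by the Clifford and Plemelj--Sokhotski identities. The scalar factorization $\phi=HG_m$ is designed precisely to reduce this to the well-known elliptic mapping property of the Helmholtz single layer on $C^2$ domains, so the $C^2$ hypothesis on $\Omega$ is sharp for this route.
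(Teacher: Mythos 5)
Your proof is correct and follows essentially the same strategy as the paper's: both directions use the choice $g = i(\alpha\cdot\nu)(f_+-f_-)$, the Plemelj--Sokhotski jump formula, and the Clifford identity $(\alpha\cdot\nu)^2=\mathbb{I}_4$ to match the traces of $u = f-\Phi(g)$. The only difference is that for the $H^1(\R^3\setminus\pO)$-regularity of $\Phi(g)$ when $g\in H^{1/2}(\pO)^4$, the paper simply invokes \cite[Lemma 3.1]{posilicanoboundary}, whereas you re-derive it by factorizing $\phi = HG_m$ and appealing to the $H^{1/2}(\pO)\to H^2(\Omega_\pm)$ mapping property of the Helmholtz single layer; this is a legitimate and in fact more self-contained way to obtain the same ingredient.
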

\begin{proof}
If $f=u+\Phi(g)$ for some $u \in H^1(\Rt)^4$ and $g\in H^{1/2}( {\pO})^4$, by \cite[Lemma 3.1]{posilicanoboundary} we have that $f \in H^1(\R^3\setminus  {\pO})^4$. 

Let us consider now $f \in  H^1(\R^3\setminus {\pO})^4$. 
Since $f \in  H^1(\Omega_\pm)^4$, by the trace theorem we also have 
$f_\pm\in H^{1/2}( {\pO})^4$. Set
\begin{equation*}
g:= i\,(\alpha\cdot \nu)(f_+-f_-)\in H^{1/2}(\pO)^4
\end{equation*}
and $u=f-\Phi(g)$. Once again, \cite[Lemma 3.1]{posilicanoboundary} shows that $u \in   H^1(\R^3\setminus {\pO})^4$. Moreover, by \eqref{Plemelj jump formulae},
\begin{equation*}
u_+-u_-=f_+-f_--C_+g+C_-g
=f_+-f_-+i\,(\alpha\cdot \nu)g=0,
\end{equation*}
thus $u_+=u_-$ and $u$ has a well defined boundary trace in $H^{1/2}( {\pO})^4$. This implies that actually $u\in H^1(\Rt)^4$, and we are done since $f=u+\Phi(g)$.
\end{proof}

\begin{theorem}\label{theo equal extensions}
Assume that $\Omega=\{x\in\R^3:\,|x|<1\}$.
For any $\lambda \in\R\setminus\{\pm 2\}$, the self-adjoint realizations $H_\lambda$ and $\widehat{H}(\lambda)$ defined by \eqref{eq defi electro} and \eqref{eq:def.delta.shell.dittexn}, respectively, coincide.
\end{theorem}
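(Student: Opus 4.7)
The strategy is to establish $D(H_\lambda)\subseteq D(\widehat{H}(\lambda))$ together with agreement of the operators on this common subset; since both realizations are self-adjoint, any proper symmetric extension is forbidden and the two must coincide.

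Fix $\varphi=u+\Phi(g)\in D(H_\lambda)$, with $u\in H^1(\R^3)^4$, $g\in L^2(\stwo)^4$ and $\lambda\operatorname{tr}_\stwo u=-(1+\lambda C_\upsigma)g$. Since $\operatorname{tr}_\stwo u\in H^{1/2}(\stwo)^4$ and $1+\lambda C_\upsigma$ is an isomorphism of $H^{1/2}(\stwo)^4$ for $\lambda\neq\pm 2$ - a fact that, in the spherical case, reduces to the straightforward inversion of finite-dimensional matrices in each invariant subspace spanned by the basis $\BB$ - one upgrades to $g\in H^{1/2}(\stwo)^4$. Lemma \ref{lem:f=u+phi(x)g} then places $\varphi\in H^1(\R^3\setminus\stwo)^4$. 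Writing
\begin{equation*}
\varphi=\sum_{j,m_j,k_j}\frac{1}{r}\big(f^+_{m_j,k_j}(r)\Phi^+_{m_j,k_j}(\hx)+f^-_{m_j,k_j}(r)\Phi^-_{m_j,k_j}(\hx)\big),
\end{equation*}
the regularity $\varphi\in L^2(\R^3)^4\cap H^1(\R^3\setminus\stwo)^4$ forces, for every partial wave, the pair $(f^+_{m_j,k_j},f^-_{m_j,k_j})$ to lie in $L^2(0,\infty)^2\cap AC((0,1)\cup(1,\infty))^2$ with well-defined one-sided limits at $r=1$, while $H\varphi=Hu\in L^2(\R^3)^4$ gives $h_{m_j,k_j}(f^+_{m_j,k_j},f^-_{m_j,k_j})^T\in L^2(0,\infty)^2$.

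It remains to extract the transmission condition $M^-_\lambda\varphi(1^+)+M^+_\lambda\varphi(1^-)=0$. Combining the Plemelj formulae \eqref{Plemelj jump formulae} with the defining relation of $D(H_\lambda)$ gives, for $\lambda\neq 0$, the representation $\varphi_\pm=-g/\lambda\mp(i/2)(\alpha\cdot\nu)g$ on $\stwo$; the trivial case $\lambda=0$ is handled separately. Eliminating $g$ from the sum and difference of $\varphi_+$ and $\varphi_-$ produces the intrinsic identity
\begin{equation*}
\varphi_+-\varphi_-=\frac{i\lambda}{2}(\alpha\cdot\nu)(\varphi_++\varphi_-)\quad\text{on }\stwo.
\end{equation*}
Since $\nu=\hx$ on $\stwo$ and $(\sigma\cdot\hx)\psi^{m_j}_{j\pm 1/2}=\psi^{m_j}_{j\mp 1/2}$, a direct computation shows that $\alpha\cdot\hx$ acts on the two-dimensional invariant subspace spanned by $\Phi^+_{m_j,k_j}$ and $\Phi^-_{m_j,k_j}$ as a Pauli-type matrix; projecting the intrinsic identity onto each such subspace and multiplying by an invertible matrix transforms it exactly into $M^-_\lambda(f^+(1^+),f^-(1^+))^T+M^+_\lambda(f^+(1^-),f^-(1^-))^T=0$. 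Thus $\varphi\in D(\widehat{H}(\lambda))$, and on this common domain both operators act as $H$ in the sense of distributions on $\R^3\setminus\stwo$ (equivalently, as $Hu$, since $H\Phi(g)=0$ off $\stwo$). Self-adjointness of both $H_\lambda$ and $\widehat{H}(\lambda)$ then upgrades the inclusion to equality.

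The main obstacle is the sign bookkeeping required to convert the intrinsic transmission identity into the specific matrix form prescribed by $M^\pm_\lambda$ in \cite{ditexnseb}; the computation is elementary but delicate because of differing conventions between the two references. A secondary technicality is the $H^{1/2}$-isomorphism property of $1+\lambda C_\upsigma$, which in the spherical setting is most easily seen from the explicit block structure of $C_\upsigma$ in the basis $\BB$.
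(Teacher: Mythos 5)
Your overall strategy is valid --- proving one inclusion of domains and using that both realizations are self-adjoint (hence maximal) --- and in fact it is exactly the structure of the paper's proof, except that you take the reverse inclusion $D(H_\lambda)\subseteq D(\widehat H(\lambda))$, whereas the paper proves $\widehat H(\lambda)\subseteq \widetilde H_\lambda\subseteq H_\lambda$, where $\widetilde H_\lambda$ is the restriction of $H_\lambda$ to elements with $g\in H^{1/2}(\stwo)^4$. Your computation turning the defining relation $\lambda\operatorname{tr}_\stwo u=-(1+\lambda C_\upsigma)g$ into the intrinsic transmission identity $\varphi_+-\varphi_-=\tfrac{i\lambda}{2}(\alpha\cdot\nu)(\varphi_++\varphi_-)$ and then into $M^-_\lambda\varphi(1^+)+M^+_\lambda\varphi(1^-)=0$ is correct and is, up to direction, the same algebra the paper carries out with $\mathcal M^\pm_\lambda$.

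The genuine gap is the step you relegate to a ``secondary technicality'': the claim that $1+\lambda C_\upsigma$ is an isomorphism of $H^{1/2}(\stwo)^4$ for every $\lambda\neq\pm2$ is simply false. Working in the basis $\BB$, the operator $1+\lambda C_\upsigma$ is block-diagonal with $2\times2$ blocks (on $\operatorname{span}\{(\psi_{j+1/2},0),(0,\psi_{j-1/2})\}$ and its companion) whose determinants equal $1+\lambda m(d_{j+1/2}-d_{j-1/2})-\lambda^2/4$, using Lemma \ref{sphere l3} with $a=0$. This determinant vanishes precisely for the (nonzero, $\neq\pm2$) values of $\lambda$ for which $0\in\sigma_p(H_\lambda)$; equivalently, the kernel of $1+\lambda C_\upsigma$ on $L^2(\stwo)^4$ is nontrivial exactly when $\Phi(g_0)$ with $(1+\lambda C_\upsigma)g_0=0$ produces a genuine $L^2$-eigenfunction of $H_\lambda$ with eigenvalue $0$. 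At those couplings you cannot conclude $g\in H^{1/2}$ by ``inverting'' $1+\lambda C_\upsigma$. (The conclusion $g\in H^{1/2}$ does still hold, but because the finite-dimensional kernel consists of smooth spherical harmonics; as it stands your argument does not establish this.) The paper's choice of direction is precisely what sidesteps this: for $\varphi\in D(\widehat H(\lambda))$ the membership $\varphi\in H^1(\R^3\setminus\stwo)^4$ is automatic from the radial ODE description, so Lemma \ref{lem:f=u+phi(x)g} hands you $g\in H^{1/2}(\stwo)^4$ for free and the invertibility of $1+\lambda C_\upsigma$ is never invoked --- indeed, the $H^{1/2}$-regularity of $g$ (Remark \ref{rem: H1/2}) is obtained as a \emph{corollary} of Theorem \ref{theo equal extensions}, not as an ingredient. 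You should either switch to the paper's direction, or replace the isomorphism claim by a correct Fredholm/regularity argument showing that any $L^2$ solution of $(1+\lambda C_\upsigma)g\in H^{1/2}$ is itself in $H^{1/2}$.
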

\begin{proof}
Consider the operator
\begin{equation*}
\begin{split}
&D(\widetilde{H}_\lambda)=\seq{
u+\Phi(g):
u\in H^1(\R^3)^4,\,g\in H^{1/2}(\stwo)^4,\, 
\lambda \operatorname {tr}_{\stwo} u =-(1+\lambda C_{\sigma})g},\\
&\widetilde{H}_\lambda={H_\lambda}|_{{D(\widetilde{H}_\lambda)}}.
\end{split}
\end{equation*}
Since $ H^{1/2}(\stwo)^4\subset L^2(\stwo)^4$, by construction we get $\widetilde{H}_\lambda\subset H_\lambda$. We are going to prove that 
\begin{equation}\label{eq:incl.hatH.tildeH}
\widehat{H}(\lambda) \subset \widetilde{H}_\lambda.
\end{equation}
With this at hand, we deduce that $\widehat{H}(\lambda) \subset \widetilde{H}_\lambda\subset H_\lambda$ and, since both $\widehat{H}(\lambda)$ and $H_\lambda$ are self-adjoint operators for $\lambda\neq \pm 2$, we finally conclude that $\hat{H}(\lambda)=H_\lambda$ and the theorem follows. Let us focus on \eqref{eq:incl.hatH.tildeH}.
Fixed $m_j$ and $k_j$ as in \eqref{eq:def.delta.shell.dittexn}, for simplicity of notation we put
\begin{align*}
f^\pm (r) &=
f^\pm_{j,m_j}(r),\\
\Phi^\pm(\hx)&=
\Phi^\pm_{j,m_j}(\hx),\\
\mathcal{H}(\lambda)&=\mathcal{H}(\lambda)_{m_j,k_j},\\
h(\lambda)&=h(\lambda)_{m_j,k_j}.
\end{align*}
Thus, any $\varphi\in\mathcal{H}(\lambda)$ can be written as 
\begin{equation*}
\varphi(x)=\frac{1}{r}
\left(f^+(r)\Phi^+(\hx)+f^-(r)\Phi^-(\hx)
\right)=
\frac{1}{r}
\begin{pmatrix}
f^+(r)\\
f^-(r)
\end{pmatrix}
\cdot
\begin{pmatrix}
\Phi^+(\hx)\\
\Phi^-(\hx)
\end{pmatrix}.
\end{equation*}
In the last expresion above, ``$\cdot$'' just means ``scalar product''. As before, we denote by $\varphi_\pm$ the boundary values of $\varphi$ when we approach $\stwo$ from $\Omega_\pm$.
Let $\mathcal{M}_\lambda^\pm$ be the operator defined on $\mathcal{H}(\lambda)$ by the action of the matrix $M^\pm_\lambda$ on the basis $\seq{\Phi^+,\Phi^-}$, that is, for any $\hx\in {\stwo}$, 
\begin{align*}
\mathcal{M}_\lambda^+ \varphi_+ (\hx)&=
\frac{1}{r}
\left(
M^+_\lambda\!
\begin{pmatrix}
f^+(1^+)\\
f^-(1^+)
\end{pmatrix}
\right)
\cdot
\begin{pmatrix}
\Phi^+(\hx)\\
\Phi^-(\hx)
\end{pmatrix},\\
\mathcal{M}_\lambda^- \varphi_- (\hx)&=
\frac{1}{r}
\left(
M^-_\lambda\!
\begin{pmatrix}
f^+(1^-)\\
f^-(1^-)
\end{pmatrix}
\right)
\cdot
\begin{pmatrix}
\Phi^+(\hx)\\
\Phi^-(\hx)
\end{pmatrix}.
\end{align*}
So, in particular, we have that  
\begin{equation}\label{eq:cond.C_lambda^pm}
\mathcal{M}^+_\lambda\varphi_-(\hx)+\mathcal{M}^-_\lambda\varphi_+(\hx)=0\quad\text{for all }\hx\in {\stwo}.
\end{equation}
Moreover, since $\varphi\in H^1(\Rt\setminus {\stwo})^4$, using \Cref{lem:f=u+phi(x)g} we can write $\varphi=u+\Phi (g)$ for some $u \in H^1(\R^3)^4$ and $g \in L^2(\stwo )^4$. Then, since $\nu(\hx)=\hx$ for all $\hx\in\stwo$, using \eqref{Plemelj jump formulae} we see that \eqref{eq:cond.C_lambda^pm} is equivalent to 
\begin{equation}\label{eq:cond.dittexn}
\begin{split}
0&=
(\mathcal{M}^+_{\lambda} +\mathcal{M}^-_\lambda)
\operatorname {tr}_{\stwo} u(\hx)
+ \big(\mathcal{M}^+_{\lambda} C_+ 
+ \mathcal{M}^-_\lambda C_- \big)g (\hx)
\\
&=
(\mathcal{M}^+_{\lambda} +\mathcal{M}^-_\lambda)
\operatorname {tr}_{\stwo} u(\hx)
+\frac{1}{2}\left(\mathcal{M}^-_\lambda -\mathcal{M}^+_\lambda\right) i(\alpha\cdot \hx)g(\hx)
+(\mathcal{M}^+_{\lambda} +\mathcal{M}^-_\lambda) C_{\upsigma} g(\hx).
\end{split}
\end{equation}
Since $M_\lambda^++M_\lambda^-=\lambda \mathbb{I}_2$, where $\mathbb{I}_2$ denotes the $2\times2$ identity matrix, we get that, for $\hx\in\stwo$, 
\begin{align}\label{eq:C++C-u}
(\mathcal{M}^+_{\lambda} +\mathcal{M}^-_\lambda) u(\hx)
&=
\lambda u(\hx),
\\
\label{eq:C++C-C_sigmag}
(\mathcal{M}^+_\lambda +\mathcal{M}^-_\lambda) C_{\upsigma} g(\hx)
&=
\lambda C_{\upsigma} g(\hx).
\end{align}
Note also that 
\begin{equation*}
\frac{1}{2}\left({M}^-_\lambda -{M}^+_\lambda\right)=
\begin{pmatrix}
0&-1\\
1&0
\end{pmatrix},
\end{equation*} 
that is the matrix that represent the operator $-i(\alpha \cdot \hx)$ on the basis $\seq{\Phi^+,\Phi^-}$ (see \cite[Equation 4.123]{thaller}). So 
\begin{equation}\label{eq:C--C+g}
\frac{1}{2}\left(\mathcal{M}^-_\lambda -\mathcal{M}^+_\lambda\right) (i\alpha\cdot \hx)g(\hx)=g(\hx)
\end{equation}
for $\hx\in\stwo$.
Combining \eqref{eq:C++C-u}, \eqref{eq:C++C-C_sigmag} and \eqref{eq:C--C+g}, \eqref{eq:cond.dittexn} becomes 
\begin{equation*}
0=\lambda \operatorname{tr}_{\stwo} u + (1+\lambda C_\upsigma)g.
\end{equation*}
In conclusion, we have seen that if $\varphi\in\mathcal{H}(\lambda)$ then $\varphi\in{H}_\lambda$. Since these arguments are valid for any $m_j$ and $k_j$, \eqref{eq:incl.hatH.tildeH} follows.
\end{proof}
\begin{remark}\label{rem: H1/2}
From the proof of \Cref{theo equal extensions} we also see that if $\lambda\neq \pm 2$ then $\widetilde{H}_\lambda=H_\lambda$, which means that the condition $\lambda \operatorname{tr}_{\stwo} u=-(1+\lambda C_\sigma)g$ in \eqref{eq defi electro} forces $g$ to belong to $H^{1/2}(\mathbb{S}^2)^4$.
\end{remark}

\section{On the spectrum for the spherical $\delta$-shell interaction}\label{sec:delta}
In this section we answer affirmatively a question posed in \cite[Section 4.2.3]{amv2}. As commented there, this yields a relation between  the eigenvalues in the gap $(-m,m)$ for the electrostatic spherical $\delta$-shell interaction and the minimizers of some precise quadratic form inequality. Before going further, we must recall some rudiments from \cite[Section 4]{amv2}.
Throughout this section, $\Omega$ denotes the unit ball and $\pO=\stwo$. Given $a\in[-m,m]$, set
\begin{equation*}
k^a(x)=\frac{e^{-\sqrt{m^2-a^2}|x|}}{4\pi|x|}\,\mathbb{I}_2
\quad\text{and}\quad
w^a(x)=\frac{e^{-\sqrt{m^2-a^2}|x|}}{4\pi|x|^3}
\left(1+\sqrt{m^2-a^2}|x|\right)\,i\,{\sigma}\cdot x
\end{equation*}
for $x\in\R^3\setminus\{0\}$. Given $f\in L^2(\upsigma)^2$ and $x\in \stwo$, set
\begin{equation*}
K^af(x)=\int_{\stwo} k^a(x-z)f(z)\,d\upsigma(z) 
\quad\text{and}\quad
W^af(x)=\lim_{\epsilon\searrow0}\int_{\{|x-z|>\epsilon\}\cap\stwo} w^a(x-z)f(z)\,d\upsigma(z).
\end{equation*}
Then 
\begin{equation*}
C_\upsigma^a
=\left(\begin{array}{cc}  (a+m)K^a& W^a\\
W^a & (a-m)K^a \end{array}\right),
\end{equation*}
where $C_\upsigma^a$ is defined as $C_\upsigma$ in \eqref{eq: Csigma def} replacing the kernel $\phi$ by 
\begin{equation*}
\phi^a(x)=\frac{e^{-\sqrt{m^2-a^2}|x|}}{4\pi|x|}\left(a+m\beta
+\left(1+\sqrt{m^2-a^2}|x|\right)\,i\alpha\cdot\frac{x}{|x|^2}\right)\quad\text{for }x\in\R^3\setminus\{0\}.
\end{equation*}
The following corresponds to \cite[Lemma 4.3]{amv2}.
\begin{lemma}\label{sphere l3}
Given $a\in(-m,m)$, there exist positive numbers $d_{j\pm 1/2}$ and purely imaginary numbers $p_{j\pm 1/2}$ for all $j=1/2,3/2,5/2,\ldots,$ and $m_j=-j,-j+1,\ldots,j$, such that
\begin{itemize}
\item[$(i)$] $K^a\,\psi_{j\pm 1/2}^{m_j}=d_{j\pm 1/2}\,\psi_{j\pm 1/2}^{m_j}$ and\,
$\lim_{j\to\infty}d_{j\pm 1/2}=0$. Moreover, 
$$0\leq d_{j\pm 1/2}\leq d_0=\frac{1-e^{-2\sqrt{m^2-a^2}}}{2\sqrt{m^2-a^2}}.$$
\item[$(ii)$] $W^a\,\psi_{j\pm 1/2}^{m_j}=p_{j\pm 1/2}\,\psi_{j\mp1/2}^{m_j}$ and \,$p_{j+1/2}=-p_{j-1/2}$. Moreover,
$$|p_{j\pm 1/2}|^2=\frac{1}{4}-(m^2-a^2)d_{j+1/2}\,d_{j-1/2}\geq
\frac{1}{4}\,e^{-2\sqrt{m^2-a^2}}\left(2-e^{-2\sqrt{m^2-a^2}}\right).$$
\end{itemize}
\end{lemma}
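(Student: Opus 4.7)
The plan is to diagonalize both $K^a$ and $W^a$ on the subspaces spanned by the spherical spinors and to extract the stated identities from classical facts about modified spherical Bessel functions. Write $\mu=\sqrt{m^2-a^2}$ and denote by $i_n,k_n$ the modified spherical Bessel functions of the first and second kind. For $(i)$ I would first expand the Yukawa kernel via the addition theorem / Gegenbauer formula: for $x,z\in\stwo$,
\begin{equation*}
\frac{e^{-\mu|x-z|}}{4\pi|x-z|}=\mu\sum_{n=0}^\infty\sum_{l=-n}^n i_n(\mu)\,k_n(\mu)\,Y^l_n(\hat x)\,\overline{Y^l_n(\hat z)}.
\end{equation*}
Since $\psi^{m_j}_{j\pm 1/2}$ is built from harmonics of degree $j\pm 1/2$, this immediately gives $K^a\psi^{m_j}_{j\pm 1/2}=d_{j\pm 1/2}\psi^{m_j}_{j\pm 1/2}$ with $d_{j\pm 1/2}=\mu\,i_{j\pm 1/2}(\mu)\,k_{j\pm 1/2}(\mu)>0$. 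The decay $d_{j\pm 1/2}\to 0$ follows from the asymptotic $I_\nu(x)K_\nu(x)\sim 1/(2\nu)$ as $\nu\to\infty$. The bound $d_{j\pm 1/2}\leq d_0$ is a Perron--Frobenius argument: the convolution kernel $k^a(x-z)$ is strictly positive, $K^a$ is compact and self-adjoint on $L^2(\stwo)^2$, and the constant function is a strictly positive eigenfunction with eigenvalue $d_0$, hence $d_0$ is the top eigenvalue. Direct substitution $i_0(x)=\sinh x/x$, $k_0(x)=e^{-x}/x$ yields $d_0=(1-e^{-2\mu})/(2\mu)$.

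For $(ii)$ I would exploit the identity $w^a(x)=-i\,\sigma\cdot\nabla k^a(x)$, which is straightforward from the explicit formulas. Extend $K^a\psi^{m_j}_{j\mp 1/2}$ off the sphere via
\begin{equation*}
u(x)=\mu\,i_{j\mp 1/2}(\mu r_<)\,k_{j\mp 1/2}(\mu r_>)\,\psi^{m_j}_{j\mp 1/2}(\hat x),\quad r_<=\min(|x|,1),\ r_>=\max(|x|,1),
\end{equation*}
and use the spherical decomposition of the Dirac operator (equivalent to \eqref{eq:dirac.spherical}; see \cite[Section~4.6]{thaller}): for a spinor $\tfrac{f(r)}{r}\psi^{m_j}_{j\mp 1/2}(\hat x)$ one has
\begin{equation*}
\sigma\cdot\nabla\Bigl(\tfrac{f(r)}{r}\psi^{m_j}_{j\mp 1/2}\Bigr)=\tfrac{1}{r}\Bigl(\tfrac{df}{dr}\mp\tfrac{j+1/2}{r}f\Bigr)\psi^{m_j}_{j\pm 1/2}.
\end{equation*}
Thus $\sigma\cdot\nabla u$ is (radial)$\cdot\,\psi^{m_j}_{j\pm 1/2}$, and $W^a\psi^{m_j}_{j\mp 1/2}$ is $-i/2$ times the sum of its boundary values at $r=1^\pm$ (standard for Cauchy-type singular integrals: the principal value equals the average of the one-sided traces). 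Applying the recurrences $i'_\nu=i_{\nu+1}+\nu i_\nu/x$, $k'_\nu=-k_{\nu+1}+\nu k_\nu/x$ for $p_{j-1/2}$ and their ``descending'' analogues $i'_\nu=i_{\nu-1}-(\nu+1)i_\nu/x$, $k'_\nu=-k_{\nu-1}-(\nu+1)k_\nu/x$ for $p_{j+1/2}$, the $d_{j\mp 1/2}$-contributions cancel and one is left with
\begin{equation*}
p_{j\mp 1/2}=\mp\,i\,\tfrac{\mu^2}{2}\bigl[i_{j+1/2}(\mu)k_{j-1/2}(\mu)-i_{j-1/2}(\mu)k_{j+1/2}(\mu)\bigr],
\end{equation*}
which is purely imaginary and satisfies $p_{j+1/2}=-p_{j-1/2}$.

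For the formula for $|p|^2$, I would apply the Wronskian identity $i_\nu(x)k_{\nu+1}(x)+i_{\nu+1}(x)k_\nu(x)=1/x^2$ (derivable from the standard Wronskian $i_\nu k'_\nu-i'_\nu k_\nu=-1/x^2$ via the same recurrences). Writing $A=i_{j+1/2}(\mu)k_{j-1/2}(\mu)$, $B=i_{j-1/2}(\mu)k_{j+1/2}(\mu)$ and using $(A-B)^2=(A+B)^2-4AB=1/\mu^4-4\,d_{j-1/2}d_{j+1/2}/\mu^2$ gives $|p_{j\pm 1/2}|^2=1/4-(m^2-a^2)d_{j+1/2}d_{j-1/2}$. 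The lower bound then follows by $d_{j\pm 1/2}\leq d_0$:
\begin{equation*}
|p_{j\pm 1/2}|^2\geq\tfrac{1}{4}-(m^2-a^2)d_0^2=\tfrac{1}{4}\bigl[1-(1-e^{-2\mu})^2\bigr]=\tfrac{e^{-2\mu}}{4}\bigl(2-e^{-2\mu}\bigr).
\end{equation*}
The main technical delicacies I expect are (a) the spherical $\sigma\cdot\nabla$ identity and the verification that $W^a$ on $\stwo$ equals the average of the boundary values of $-i\sigma\cdot\nabla u$, which is a jump-type analysis for the Cauchy-like kernel $w^a$, and (b) sign-bookkeeping in the Bessel recurrences, where the cancellation producing the final compact expression for $p_{j\mp 1/2}$ requires using the ``ascending'' recurrence in one case and the ``descending'' in the other.
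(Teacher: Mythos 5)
Your proof is correct in substance, but note that the paper does not prove this lemma at all: it simply quotes it as \cite[Lemma 4.3]{amv2} (``The following corresponds to\dots''), so there is no in-text argument to compare against. The closest thing the paper does offer is the identification of $d_{j\pm 1/2}$ in Theorem~\ref{thm: positive answer}, and there the route is quite different from yours: the authors solve the radial Dirac system \eqref{differential system} on $(0,1)$ and $(1,\infty)$ with $\operatorname{I}_\nu,\operatorname{K}_\nu$, impose the transmission condition at $r=1$, and read off $d_{j\pm 1/2}$ by comparing the resulting quantization condition \eqref{eq:final.eigenvalue} with \eqref{eq eigenvalue}. You instead diagonalize the boundary integral operators directly: the Gegenbauer addition theorem for the Yukawa kernel gives $K^a$, the identity $w^a=-i\,\sigma\cdot\nabla k^a$ together with the single-layer jump formula gives $W^a$ as the average of the one-sided traces of $-i\sigma\cdot\nabla$ of the extended potential, and the spherical reduction of $\sigma\cdot\nabla$ (Thaller \S4.6) plus the modified Bessel recurrences produce $p_{j\pm 1/2}$. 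Both routes yield the same $d_{j\pm 1/2}=\operatorname{I}_{(j+1/2)\pm 1/2}(\mu)\operatorname{K}_{(j+1/2)\pm 1/2}(\mu)$, $\mu=\sqrt{m^2-a^2}$, and your Wronskian manipulation $(A-B)^2=(A+B)^2-4AB$ with $A+B=1/\mu^2$ correctly produces $|p_{j\pm 1/2}|^2=\tfrac14-\mu^2 d_{j+1/2}d_{j-1/2}$. The approach you take is arguably more direct for the lemma as stated (it proves all three items at once and does not presuppose $H_\lambda=\widehat H(\lambda)$), while the paper's ODE method is better adapted to its later goal of producing the explicit eigenvalue condition $D_{j\pm1/2}(a,\lambda)=0$.

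Two small points worth tightening. First, your normalization of the modified spherical Bessel functions is non-standard: you take $k_0(x)=e^{-x}/x$, i.e.\ $k_n=\sqrt{2/(\pi x)}\,\operatorname{K}_{n+1/2}$, which is $2/\pi$ times the Abramowitz--Stegun $k_n$. With that convention your addition theorem and the product $\mu\,i_nk_n=\operatorname{I}_{n+1/2}\operatorname{K}_{n+1/2}$ are internally consistent, but you should state it, since a reader using the standard $k_n$ will find a spurious factor of $\pi/2$. Second, the Perron--Frobenius step (the constant is a positive eigenfunction of the strictly positive scalar kernel $k^a(x-z)$, hence $d_0$ is the top eigenvalue) is fine, but you could dispense with it entirely: once you have $d_n=\operatorname{I}_{n+1/2}(\mu)\operatorname{K}_{n+1/2}(\mu)$, monotonicity of $\nu\mapsto\operatorname{I}_\nu(x)\operatorname{K}_\nu(x)$ for $\nu\geq 0$ gives $d_n\leq d_0$ directly and also makes $\lim_{n\to\infty}d_n=0$ immediate from $\operatorname{I}_\nu(x)\operatorname{K}_\nu(x)\sim 1/(2\nu)$. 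The sign-bookkeeping you worried about does come out right: using the ascending recurrence for $\nu=j-1/2$ and the descending one for $\nu=j+1/2$, the $i_\nu k_\nu$-terms cancel against the $\mp(j+1/2)F(1)$-contribution in both cases, leaving exactly $p_{j\mp1/2}=\mp i\frac{\mu^2}{2}\bigl(i_{j+1/2}k_{j-1/2}-i_{j-1/2}k_{j+1/2}\bigr)$, which is purely imaginary and antisymmetric in $\pm$ as required.
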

The following result allows us to construct eigenstates for $H_\lambda$ from the eigenfunctions of $K^a$; it corresponds to \cite[Lemma 4.6]{amv2}.
\begin{lemma}\label{lemma: criteria eigenstates}
Let $H_\lambda$ be as in \eqref{eq defi electro}. If $\lambda>0$ and $a\in (-m,m)$ satisfy
\begin{equation}\label{eq eigenvalue}
\frac{\lambda^2}{4}-\big((m+a)d_{j\mp 1/2}-(m-a)d_{j\pm 1/2}\big)\lambda=1\quad\text{for some } j,
\end{equation}
then, for any $m_j$, $\psi_{j\pm 1/2}^{m_j}$ gives rise to an eigenfunction for $H_\lambda$ with eigenvalue $a$.
\end{lemma}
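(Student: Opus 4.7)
The plan is to translate the eigenvalue equation $(H_\lambda-a)\varphi=0$ for $a\in(-m,m)$ into a linear equation on $\stwo$, and then to exploit the diagonalisation supplied by \Cref{sphere l3} on a two-dimensional subspace of $L^2(\stwo)^4$ built from the Pauli spherical harmonics $\psi^{m_j}_{j\pm 1/2}$.

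First I would reduce the spectral problem. Since $a\in(-m,m)$ lies in the resolvent set of the free Dirac operator on $L^2(\Rt)^4$ and $\phi^a$ is its Green's function, distributional reasoning forces any eigenstate $\varphi\in D(H_\lambda)$ at eigenvalue $a$ to have the form $\varphi=\Phi^a(h)$ for some $h\in L^2(\stwo)^4$: from $(H-a)\varphi=-\lambda\tfrac{\varphi_++\varphi_-}{2}\delta_{\stwo}$ one observes that $\varphi-\Phi^a(h)$ would be an $L^2$ null-vector of $H-a$, hence vanishes. The Plemelj-Sokhotski identity \eqref{Plemelj jump formulae} adapted to $\phi^a$ gives $\varphi_\pm=\mp\tfrac{i}{2}(\alpha\cdot\nu)h+C^a_\upsigma h$, so the compatibility condition $h=-\lambda(\varphi_++\varphi_-)/2$ collapses to $(1+\lambda C^a_\upsigma)h=0$; conversely, any nontrivial $h$ in this kernel produces a nonzero eigenfunction $\Phi^a(h)$.

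Next, using the $2\times 2$ block form of $C^a_\upsigma$ together with the fact from \Cref{sphere l3} that $K^a$ diagonalises on each Pauli harmonic while $W^a$ swaps $\psi^{m_j}_{j\pm 1/2}$ with $\psi^{m_j}_{j\mp 1/2}$, one sees that $1+\lambda C^a_\upsigma$ preserves the two-dimensional subspace spanned by $(\psi^{m_j}_{j\mp 1/2},0)^T$ and $(0,\psi^{m_j}_{j\pm 1/2})^T$. Plugging in the Ansatz $h=(c_+\psi^{m_j}_{j\mp 1/2},\,c_-\psi^{m_j}_{j\pm 1/2})^T$, existence of a nontrivial $(c_+,c_-)$ reduces to the vanishing of the $2\times 2$ determinant
\begin{equation*}
\big[1+\lambda(a+m)d_{j\mp 1/2}\big]\big[1+\lambda(a-m)d_{j\pm 1/2}\big]-\lambda^2\, p_{j\mp 1/2}\, p_{j\pm 1/2}=0.
\end{equation*}
Since $p_{j\pm 1/2}$ is purely imaginary with $p_{j+1/2}=-p_{j-1/2}$, the cross product equals $|p_{j\pm 1/2}|^2=\tfrac14-(m^2-a^2)d_{j+1/2}d_{j-1/2}$ by \Cref{sphere l3}(ii); the $d_{j+1/2}d_{j-1/2}$ contributions then cancel exactly, leaving precisely \eqref{eq eigenvalue}. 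Hence under the stated hypothesis a nonzero $(c_+,c_-)$ exists and $\Phi^a(h)$ is a nonzero eigenstate.

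The main obstacle I foresee is verifying that the eigenfunction $\Phi^a(h)$ so constructed actually lies in $D(H_\lambda)$ as described by \eqref{eq defi electro}, namely that it admits a splitting $\Phi^a(h)=u+\Phi(g)$ with $u\in H^1(\Rt)^4$, $g\in L^2(\stwo)^4$ satisfying $\lambda\operatorname{tr}_{\stwo}u=-(1+\lambda C_\upsigma)g$. The natural approach is to take $g=h$ and $u=(\Phi^a-\Phi)(h)$, exploit that the leading singularity of $\phi^a-\phi$ at the origin cancels so that $u\in H^1(\Rt)^4$, and then rewrite $(1+\lambda C^a_\upsigma)h=0$ in terms of $C_\upsigma$ plus the regular perturbation $C^a_\upsigma-C_\upsigma$ to recover the required boundary identity.
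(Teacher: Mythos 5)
Your proposal is correct, and it is precisely the Birman--Schwinger-type reduction that underlies the cited result: the present paper imports Lemma~\ref{lemma: criteria eigenstates} from [amv2] without reproving it, but the remark immediately after it describes exactly the eigenfunction you construct, namely $\varphi=\phi*(a\Phi^a(g))+\Phi(g)=\Phi^a(g)$ with $g$ in $\ker(1+\lambda C^a_\upsigma)$ and the split $u=(\Phi^a-\Phi)(g)\in H^1(\Rt)^4$. Your $2\times2$ determinant on the span of $(\psi^{m_j}_{j\mp1/2},0)^T$ and $(0,\psi^{m_j}_{j\pm1/2})^T$, after cancelling the $d_{j+1/2}d_{j-1/2}$ terms via Lemma~\ref{sphere l3}(ii), reproduces \eqref{eq eigenvalue} exactly, and the boundary condition $\lambda\operatorname{tr}_{\stwo}u=-(1+\lambda C_\upsigma)g$ collapses to $(1+\lambda C^a_\upsigma)g=0$ as you claim, since $\operatorname{tr}_{\stwo}u=(C^a_\upsigma-C_\upsigma)g$.
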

\begin{remark}
In Lemma \ref{lemma: criteria eigenstates}, the expression ``gives rise to an eigenfunction'' means that, if one defines
\begin{equation*}
g=\begin{pmatrix}f\\h\end{pmatrix}\in L^2(\stwo)^4,\quad\text{where } h=\psi_{j\pm 1/2}^{m_{j}}\quad\text{and}\quad
f=-\big(1/\lambda+(a+m)K^a\big)^{-1}W^a\,h,
\end{equation*}
setting $\varphi=\phi*(a\Phi^a(g))+\Phi(g)$ one gets that $H_{\lambda}\varphi=a\varphi$. Here, $\Phi^a$ is defined as $\Phi$ in \eqref{defi Phia} replacing $\phi$ by $\phi^a$.
\end{remark}

In \cite[Question 4.7]{amv2}, the following question was raised:
\begin{question}\label{ques}
Let $d_{j\pm 1/2}$ be the coefficients given by {\em Lemma \ref{sphere l3}}. Is it true that $d_{j+1/2}d_{j-1/2}<d_1d_0$ for all $j=3/2, 5/2, 7/2\ldots$?
\end{question}
Theorem \ref{thm: positive answer} answers it in the affirmative and, as commented at the end of \cite[Section 4.2.3]{amv2}, it yields the following result related to Lemma \ref{lemma: criteria eigenstates}. We first recall the values of $d_0$ and $d_1$ from Lemma \ref{sphere l3} (computed in \cite{amv2}) and a precise constant $d_*$ that will appear below, see \cite[equations $(4.31)$, $(4.32)$ and $(4.39)$, respectively]{amv2}:
\begin{equation*}
\begin{split}
d_0&=\frac{1-e^{-2\sqrt{m^2-a^2}}}{2\sqrt{m^2-a^2}},\\
d_1&=\frac{1}{2\sqrt{m^2-a^2}}\bigg(1-\frac{1}{m^2-a^2}+\left(1+\frac{1}{\sqrt{m^2-a^2}}\right)^2e^{-2\sqrt{m^2-a^2}}\bigg),\\
d_*&=\frac{1}{2\sqrt{m^2-a^2}}
-\frac{1}{2}\left(1+\frac{1}{\sqrt{m^2-a^2}}\right)
e^{-2\sqrt{m^2-a^2}}.
\end{split}
\end{equation*}

\begin{corollary}\label{coro: minimizers}
Let $a\in(-m,m)$ and $\lambda>0$.
Then, for any $f\in L^2(\upsigma)^2$, 
\begin{equation}\label{ineqbis}
\begin{split}
\int_{\stwo} | f|^2\,d\upsigma
&\leq \frac{1/\lambda +(m+a)d_0}{2d_*^2}\int_{\stwo} \big(1/\lambda+(m+a)K^a\big)^{-1}
W^a f\cdot\overline{W^af}\,d\upsigma\\ 
&\quad+\frac{1}{2(1/\lambda +(m+a)d_0)}\int_{\stwo} \big(1/\lambda+(m+a)K^a\big)(\sigma\cdot \nu)f
\cdot\overline{(\sigma\cdot \nu) f}\,d\upsigma.
\end{split}
\end{equation}
The equality in $(\ref{ineqbis})$ is only attained at linear combinations of $\psi_1^{l}$ for $l\in\{-1/2,1/2\}$. If
\begin{equation}\label{a2}
\frac{\lambda^2}{4}-\big((m+a)d_{0}-(m-a)d_{1}\big)\lambda=1
\end{equation}
then the minimizers of $(\ref{ineqbis})$ give rise to eigenfunctions of $H_\lambda$. 
Besides, these conclusions also hold if we exchange the roles of $d_0$ and $d_1$ in $(\ref{ineqbis})$ and $(\ref{a2})$ and we replace $\psi_1^{l}$ by $\psi_0^{l}$ (that is, we exchange the roles of $j+1/2$ and $j-1/2$ for $j=1/2$).
\end{corollary}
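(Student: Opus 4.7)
The plan is to diagonalise both sides of \eqref{ineqbis} against the orthonormal basis $\{\psi^{m_j}_{j\pm 1/2}\}_{j,m_j}$ of $L^2(\stwo)^2$ and reduce the statement to a family of decoupled scalar inequalities. Writing
\begin{equation*}
f=\sum_{j,m_j}\bigl(a^+_{j,m_j}\,\psi^{m_j}_{j-1/2}+a^-_{j,m_j}\,\psi^{m_j}_{j+1/2}\bigr),
\end{equation*}
\Cref{sphere l3}(i)--(ii) together with $(\sigma\cdot\hx)\psi^{m_j}_{j\pm 1/2}=\psi^{m_j}_{j\mp 1/2}$ (and $\nu=\hx$ on $\stwo$) show that $K^a$ preserves each eigenspace while $W^a$ and $\sigma\cdot\nu$ swap $j-1/2\leftrightarrow j+1/2$. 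Setting
\begin{equation*}
\beta^\pm_j:=\tfrac{1}{\lambda}+(m+a)\,d_{j\pm 1/2},\quad \beta_0:=\beta^-_{1/2},\quad P_j:=|p_{j\pm 1/2}|^2=\tfrac14-(m^2-a^2)\,d_{j+1/2}d_{j-1/2},
\end{equation*}
a direct computation using orthonormality reduces \eqref{ineqbis} to the scalar inequalities
\begin{equation*}
\frac{\beta_0\,P_j}{2\,d_*^2\,\beta^\pm_j}+\frac{\beta^\pm_j}{2\,\beta_0}\ \geq\ 1,\qquad j=\tfrac12,\tfrac32,\ldots,
\end{equation*}
for both choices of sign, where the ``$+$'' sign governs the coefficients $a^+_{j,m_j}$ and the ``$-$'' sign the $a^-_{j,m_j}$.

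Next I would apply AM--GM to the left-hand side: the two summands have product $P_j/(4d_*^2)$, so their sum is bounded below by $\sqrt{P_j}/d_*$. The proof thereby reduces to showing $P_j\geq d_*^2$ for every $j$. For $j=1/2$ I would verify the identity $P_{1/2}=d_*^2$ by substituting the closed forms for $d_0$, $d_1$, $d_*$: with $s:=\sqrt{m^2-a^2}$ and $E:=e^{-2s}$, both sides collapse to $\bigl(1-(s+1)E\bigr)^2/(4s^2)$. For $j\geq 3/2$, \Cref{thm: positive answer} (the affirmative answer to \Cref{ques}) yields $d_{j+1/2}d_{j-1/2}<d_0 d_1$, hence $P_j>d_*^2$, closing the chain of estimates.

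For the equality case, I would retrace both steps: AM--GM is tight iff $\beta_0 P_j/(d_*^2\beta^\pm_j)=\beta^\pm_j/\beta_0$, and the bound $\sqrt{P_j}/d_*\geq 1$ is tight iff $j=1/2$. Combined they force $\beta^\pm_{1/2}=\beta_0$; since an elementary monotonicity argument (the function $\phi(s):=e^{-2s}(2s^2+2s+1)$ satisfies $\phi(0)=1$ and $\phi'(s)=-4s^2 e^{-2s}<0$, whence $\phi(s)<1$ and $d_0>d_1$ for every $s>0$) rules out the ``$+$'' sign, the minimisers are precisely the linear combinations of $\psi^{l}_{j+1/2}=\psi_1^l$ with $l\in\{-1/2,1/2\}$. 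The eigenfunction statement then follows from \Cref{lemma: criteria eigenstates}: specialised to $j=1/2$ and the ``$+$'' branch, condition \eqref{eq eigenvalue} is exactly \eqref{a2}. The mirrored statement obtained by swapping $d_0\leftrightarrow d_1$ and $\psi_1^l\leftrightarrow\psi_0^l$ follows identically by running the same argument with the opposite sign convention, using the symmetry of the identity $P_{1/2}=d_*^2$ in the pair $(d_0,d_1)$.

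The main obstacle will be the base-case identity $P_{1/2}=d_*^2$: although elementary, it is a delicate manipulation of exponentials and powers of $s$, and without it the specific constants $(1/\lambda+(m+a)d_0)/(2d_*^2)$ and $1/(2(1/\lambda+(m+a)d_0))$ appearing in \eqref{ineqbis} cannot be matched to the AM--GM lower bound $\sqrt{P_j}/d_*$. Everything else is orthonormal-expansion bookkeeping combined with the already-proven \Cref{thm: positive answer}.
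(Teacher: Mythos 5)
Your proposal is correct, and it reconstructs the argument that the paper itself delegates to \cite{amv2} (the paper states the corollary without proof, remarking that it follows from \Cref{thm: positive answer} via the discussion in \cite[Section 4.2.3]{amv2}). The spherical-harmonic diagonalisation, the reduction to the scalar inequalities via $K^a\psi^{m_j}_{j\pm1/2}=d_{j\pm1/2}\psi^{m_j}_{j\pm1/2}$ and the index swap under $W^a$ and $\sigma\cdot\nu$, the AM--GM step with product $P_j/(4d_*^2)$, and the base-case identity $P_{1/2}=d_*^2$ (which indeed reduces to $\bigl(1-(s+1)e^{-2s}\bigr)^2/(4s^2)$ on both sides) together with \Cref{thm: positive answer} for $j\geq 3/2$, are exactly the ingredients needed to obtain the stated constants and to pin down the equality set as linear combinations of $\psi_1^l$. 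The monotonicity argument showing $d_0>d_1$ via $\phi(s)=e^{-2s}(2s^2+2s+1)$ is correct and is what excludes the other branch at $j=1/2$. The only quibble is notational: your ``$+$'' branch (with $\beta_j^+=1/\lambda+(m+a)d_{j+1/2}$, governing the coefficient of $\psi^{m_j}_{j-1/2}$) clashes with the ``$\pm$'' convention of \Cref{lemma: criteria eigenstates}, so the sentence ``specialised to $j=1/2$ and the `$+$' branch'' should really refer to the lemma's sign choice that produces $\psi^{m_j}_{j+1/2}$; the conclusion you draw is nonetheless the right one.
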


\begin{theorem}\label{thm: positive answer}
Let $d_{j\pm 1/2}$ be the coefficients given by {\em Lemma \ref{sphere l3}}. Then,  
\begin{equation}\label{def dj}
d_{j\pm 1/2}=\operatorname{I}_{(j+1/2)\pm 1/2}\big(\sqrt{m^2-a^2}\big)\operatorname{K}_{(j+1/2)\pm 1/2}\big(\sqrt{m^2-a^2}\big),
\end{equation} 
where $\operatorname{I}$ and $\operatorname{K}$ denote the standard second order Bessel's functions.
Moreover,
\begin{equation}\label{conjecture}
d_{j+1/2}d_{j-1/2}<d_0 d_1\quad\text{ for all $j=3/2, 5/2, 7/2\ldots$.}
\end{equation}
\end{theorem}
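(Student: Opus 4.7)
My proof splits into the identification \eqref{def dj} and the monotonicity \eqref{conjecture}.

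\medskip
\textbf{Identifying $d_{j\pm 1/2}$.} The operator $K^a$ is a rotationally invariant scalar-kernel convolution on $\stwo$, so it is diagonal in the spinor harmonics $\psi^{m_j}_{j\pm 1/2}$, with eigenvalue depending only on the orbital index $n=j\pm 1/2$. To compute it, note that $K^a Y_n^l$ is the restriction to $\stwo$ of $u(x):=\int_{\stwo}\frac{e^{-\mu|x-y|}}{4\pi|x-y|}Y_n^l(\hat y)\,d\upsigma(y)$ (with $\mu:=\sqrt{m^2-a^2}$), which solves $(\mu^2-\Delta)u=Y_n^l\,\delta_{\stwo}$ in $\R^3$. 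Writing $u(x)=(\rho(r)/r)Y_n^l(\hat x)$, separation of variables reduces this to the 1D problem
\[
-\rho''+\Big[\mu^2+\frac{n(n+1)}{r^2}\Big]\rho=\delta(r-1)\quad\text{on }(0,\infty),
\]
whose linearly independent homogeneous solutions are $\rho_1(r)=\sqrt{r}\,\II_{n+1/2}(\mu r)$ (regular at $r=0$) and $\rho_2(r)=\sqrt{r}\,\KK_{n+1/2}(\mu r)$ (decaying at $\infty$), with constant Wronskian $W(\rho_1,\rho_2)\equiv -1$ coming from $\II_\alpha\KK_\alpha'-\II_\alpha'\KK_\alpha=-1/x$. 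Gluing with the jump condition yields $\rho(1)=\rho_1(1)\rho_2(1)=\II_{n+1/2}(\mu)\KK_{n+1/2}(\mu)$, which is \eqref{def dj} upon setting $n=j\pm 1/2$.

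\medskip
\textbf{Strategy for \eqref{conjecture}.} Set $g_\nu(x):=\II_\nu(x)\KK_\nu(x)$ and $\mu:=\sqrt{m^2-a^2}$. Combining $\II_\nu'=\II_{\nu-1}-(\nu/x)\II_\nu$, $\KK_\nu'=-\KK_{\nu-1}-(\nu/x)\KK_\nu$, the three-term recurrences $\II_{\nu+1}=\II_{\nu-1}-(2\nu/x)\II_\nu$, $\KK_{\nu+1}=\KK_{\nu-1}+(2\nu/x)\KK_\nu$ and the Wronskian identity $\II_\nu\KK_{\nu-1}+\II_{\nu-1}\KK_\nu=1/x$ gives, after a short manipulation, the key recurrence
\[
g_{\nu+1}(x)-g_{\nu-1}(x)=\frac{2\nu}{x}\,g_\nu'(x).
\]
I then invoke the classical fact that $x\mapsto \II_\nu(x)\KK_\nu(x)$ is strictly decreasing on $(0,\infty)$ for every $\nu\ge 0$ (see e.g.\ the works of Penfold--Vanden Berg--Byrd and Baricz on products of modified Bessel functions), so $g_\nu'(\mu)<0$. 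Combined with the recurrence, this yields the ``skip-two'' inequality $d_{n+2}<d_n$ for every integer $n\ge 0$, and iterating along even and odd indices gives $d_{2k}<d_0$ and $d_{2k+1}<d_1$ for all $k\ge 1$.

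\medskip
\textbf{Conclusion.} Given $j\ge 3/2$, let $\nu:=j+1/2\in\{2,3,4,\ldots\}$, so $d_{j+1/2}d_{j-1/2}=d_\nu d_{\nu-1}$, and exactly one of $\nu,\nu-1$ is even. A short case analysis on the parity of $\nu$ shows that in either case one factor of $d_\nu d_{\nu-1}$ is strictly less than $d_0$ and the other strictly less than $d_1$; hence $d_\nu d_{\nu-1}<d_0 d_1$, proving \eqref{conjecture}.

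\medskip
The main obstacle is the classical $x$-monotonicity of $g_\nu$; I plan to cite it, but a self-contained proof can be obtained by rewriting $g_\nu'(x)=\II_{\nu+1}(x)\KK_\nu(x)-\II_\nu(x)\KK_{\nu-1}(x)$ via the Bessel recurrences and using Amos-type continued-fraction bounds for the ratios $\II_{\nu+1}/\II_\nu$ and $\KK_\nu/\KK_{\nu-1}$ to show that this difference is negative.
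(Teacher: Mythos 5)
Your proof is correct, and both halves take a genuinely different route from the paper's. For the identity $d_n=\operatorname{I}_{n+1/2}(\mu)\operatorname{K}_{n+1/2}(\mu)$ (where $\mu=\sqrt{m^2-a^2}$ and $n=j\pm 1/2$), the paper does not compute $K^a$ directly: it instead solves the radial ODE eigenvalue problem for the reduced $\delta$-shell operator $h(\lambda)_{m_j,k_j}$ with the Bessel ansatz \eqref{eq: definition phi k_j>0}, matches at $r=1$ with the transmission condition $M_\lambda^-\varphi(1^+)+M_\lambda^+\varphi(1^-)=0$, uses the Wronskian identity $\operatorname{I}_\nu\operatorname{K}_{\nu-1}+\operatorname{I}_{\nu-1}\operatorname{K}_\nu=1/x$ to simplify the vanishing-determinant condition, and then reads off $d_{j\pm1/2}$ by comparing the resulting equation \eqref{eq:final.eigenvalue} with the abstract eigenvalue condition \eqref{eq eigenvalue} from Lemma \ref{lemma: criteria eigenstates}. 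Your single-layer-potential computation (reduce $(\mu^2-\Delta)u=Y_n^l\,\delta_{\stwo}$ to a $1$D Green's function via the Wronskian $W\equiv -1$) is more direct and avoids the detour through $h(\lambda)_{m_j,k_j}$; it also makes the statement $K^a\psi_{j\pm1/2}^{m_j}=d_{j\pm1/2}\psi_{j\pm1/2}^{m_j}$ transparent from the start. For the inequality \eqref{conjecture}, both arguments ultimately hinge on the skip-two monotonicity $d_{n+2}<d_n$, but they obtain it and use it differently: the paper cites \cite[Theorem 2]{turan} for $d_n-d_{n-2}\le 0$ and then runs an induction $d_{n-1}d_n-d_0d_1=d_{n-1}(d_n-d_{n-2})+(d_{n-1}d_{n-2}-d_0d_1)$, verifying the base case $d_2<d_0$ by an explicit $\cosh/\sinh$ computation; you instead derive the skip-two inequality from the nice recurrence $g_{\nu+1}-g_{\nu-1}=\tfrac{2\nu}{x}g_\nu'$ for $g_\nu=\operatorname{I}_\nu\operatorname{K}_\nu$ plus the classical strict $x$-monotonicity of $g_\nu$, and then close with a parity argument ($d_{2k}<d_0$, $d_{2k+1}<d_1$) that bypasses the induction and the explicit base-case computation. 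Both approaches ultimately cite a known Bessel fact (Baricz--Ponnusamy for the paper, monotonicity of $\operatorname{I}_\nu\operatorname{K}_\nu$ for you), so the level of self-containedness is comparable; your route makes the mechanism behind the inequality somewhat more visible and avoids the ad hoc $n=2$ check. Two minor remarks: you only need the monotonicity $g_\nu'<0$ for $\nu\ge 3/2$, so the edge case $\nu\in[0,1/2)$ in the cited result is irrelevant; and the one fully correct sign of $\Delta_{\stwo}Y_n^l$ (namely $-n(n+1)Y_n^l$) is the one you implicitly use, which is the standard convention rather than the sign printed in the paper's preliminaries.
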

\begin{proof}
Let us first compute $d_{j\pm 1/2}$ in terms of Bessel's functions. Fixed $m_j$ and $k_j$, due to \cite[Lemma 4.15]{thaller} and \Cref{theo equal extensions} it is enough to find some $a\in (-m,m)$ which is an eigenvalue for the operator $h(\lambda)_{m_j,k_j}$. We want to find some
\begin{equation*}
\begin{pmatrix}f\\g\end{pmatrix}\in {D}(h(\lambda)_{m_j,k_j})
\end{equation*}
verifying the following system of differential equations:
\begin{equation}\label{differential system}
\begin{cases}
(m-a)f+(-\frac{d}{dr}+\frac{{k_j}}{r})g&=0,\\
(\frac{d}{dr}+\frac{{k_j}}{r})f-(m+a)g&=0.
\end{cases}
\end{equation}
Set $M=\sqrt{m^2-a^2}$. Since $k_j=\pm (j+1/2)$, we set
\begin{equation}\label{eq: definition phi k_j>0}
f(r)=\!\begin{cases}
A\sqrt{r}\,\II_{(j+1/2)\pm 1/2}(Mr)&\!\!\text{if }r<1\\
B\sqrt{r}\,\KK_{(j+1/2)\pm 1/2}(Mr)&\!\!\text{if } r>1
\end{cases}\!,\quad \!
g(r)=\!\begin{cases}
\frac{ AM}{m+a}\sqrt{r}\,\II_{(j+1/2)\mp1/2}(Mr)&\!\!\text{if }r<1\\
-\frac{B M}{m+a}\sqrt{r}\,\KK_{(j+1/2)\mp1/2}(Mr)&\!\!\text{if } r>1
\end{cases}\!,
\end{equation}
for some $(A,B)\neq (0,0)$. If we put 
\begin{equation*}
\varphi=\begin{pmatrix}f\\g\end{pmatrix},
\end{equation*}
then $\varphi \in  L^2(0,+\infty)^2$, 
$h_{m_j,k_j}\varphi\in L^2(0,+\infty)^2$, 
$\varphi\in AC\big((0,1)\cup(1,+\infty)\big)^2$ and $\varphi$ satisfies \eqref{differential system}. Thus, to get that $\varphi$ is an eigenvector for the operator $h(\lambda)_{m_j,k_j}$ it remains to prove that $\varphi\in D(h(\lambda)_{m_j,k_j})$, that is we have to show that $ M^-_\lambda \varphi(1^+)+M^+_\lambda \varphi(1^-)=0$. In other words, the following linear system must hold:
\begin{equation*}
\begin{cases}
\begin{array}{l}
 A \left(\frac{M}{a+m} \II_{(j+1/2)\mp 1/2}(M)+\frac{\lambda}{2}  \II_{(j+1/2)\pm 1/2}(M)\right)\\
\quad +B \left(\frac{M }{a+m}\KK_{(j+1/2)\mp 1/2}(M)+\frac{\lambda}{2}  \KK_{(j+1/2)\pm 1/2}(M)\right) = 0,\\
\\
 A \left(\frac{\lambda M }{2 (a+m)}\II_{(j+1/2)\mp 1/2}(M)- \II_{(j+1/2)\pm 1/2}(M)\right)\\
\quad +B \left(\KK_{(j+1/2)\pm 1/2}(M)-\frac{\lambda M }{2 (a+m)}\KK_{(j+1/2)\mp 1/2}(M)\right)=0.
\end{array}
\end{cases}
\end{equation*}
Since this is a $2\times 2$ homogeneous linear system on $A$ and $B$ and we are supposing that $(A,B)\neq (0,0)$, we deduce that the associated matrix has null determinant. This means that
\begin{equation}\label{eq:determinant.delta}
\begin{split}
0&= -\frac{\lambda ^2M }{4(a+m)}\left(\II_{(j+1/2)\pm 1/2}(M)
 \KK_{(j+1/2)\mp 1/2}(M)
 +
 \II_{(j+1/2)\mp 1/2}(M) \KK_{(j+1/2)\pm 1/2}(M)
\right) 
   \\
&\quad
+
\frac{\lambda}{m+a}
\left(
(m+a)
\II_{(j+1/2)\pm 1/2}(M) \KK_{(j+1/2)\pm 1/2}(M)
-
(m-a)
\II_{(j+1/2)\mp 1/2}(M) \KK_{(j+1/2)\mp 1/2}(M)
\right)
\\
&\quad+
\frac{M}{m+a}
\left(
 \II_{(j+1/2)\pm 1/2}(M) \KK_{(j+1/2)\mp 1/2}(M)
+
\II_{(j+1/2)\mp 1/2}(M) \KK_{(j+1/2)\pm 1/2}(M)
\right)
.
\end{split}
\end{equation}
By \cite[Equation 10.20.2]{olver2010nist} we get that 
\begin{equation}\label{eq:wronskian}
\II_{(j+1/2)\pm 1/2}(M)
 \KK_{(j+1/2)\mp 1/2}(M)
 +
 \II_{(j+1/2)\mp 1/2}(M) \KK_{(j+1/2)\pm 1/2}(M)=
 \frac{1}{M}.
\end{equation}
Finally, combining \eqref{eq:determinant.delta} and \eqref{eq:wronskian} we see that the following must hold:
\begin{equation}\label{eq:final.eigenvalue}
\begin{split}
\frac{\lambda^2}{4}-\Big((m+a)&\II_{(j+1/2)\pm 1/2}(M)\KK_{(j+1/2)\pm 1/2}(M)\\
&-(m-a)\II_{(j+1/2)\mp 1/2}(M)\KK_{(j+1/2)\mp 1/2}(M)\Big)\lambda-1=0.
\end{split}
\end{equation}
In conclusion, if we define
\begin{equation}\label{condition limit}
\begin{split}
D_{j\pm1/2}(a,\lambda)&=
\frac{\lambda^2}{4}-\Big((m+a)\II_{(j+1/2)\pm 1/2}(M)\KK_{(j+1/2)\pm 1/2}(M)\\
&\quad-(m-a)\II_{(j+1/2)\mp 1/2}(M)\KK_{(j+1/2)\mp 1/2}(M)\Big)\lambda-1
\end{split}
\end{equation}
and we take 
$\varphi=\begin{pmatrix}f\\g\end{pmatrix}$ with $f$ and $g$ given by
\eqref{eq: definition phi k_j>0}, 
then $\varphi$ is an eigenfunction for $h(\lambda)_{m_j,k_j}$ with eigenvalue $a$ if and only if $D_{j\pm1/2}(a,\lambda)=0$. In this case 
the function 
\begin{equation*}
\psi(x)=
\frac{1}{r}
\left(
f(r)\Phi^+_{m_j,k_j}(\hx)
+g(r)\Phi^-_{m_j,k_j}(\hx)
\right)
\end{equation*}
is an eigenfunction for $H_\lambda$ with eigenvalue $a$.
For this reason, a comparison of \eqref{eq:final.eigenvalue} and \eqref{eq eigenvalue} yields  \eqref{def dj}, as desired.

Let us finally prove \eqref{conjecture}. We put $n=j+1/2\in \mathbb{N}$. Since $j>1/2$, we have $n>1$. Then \eqref{conjecture} is equivalent to 
\begin{equation}\label{dj+ dj- 2}
d_n d_{n-1}<d_0 d_1,\quad\text{for all } n\geq 2.
\end{equation} 
We are going to show \eqref{dj+ dj- 2} by induction. 
For $n=2$, we have to check that $d_1 d_2<d_1 d_0$, which is equivalent to $d_1(d_2-d_0)<0$. Since $d_n\geq 0$ for all $n>1$, it is enough to show that $d_2-d_0<0$.
But, from \eqref{def dj} we easily get that
\begin{equation*}
d_2-d_0=
\frac{ 3 \left(M^3+2 M^2+3 M+3\right) \sinh (M)-3 M \left(M^2+3 M+3\right) \cosh (M)}{e^{M}M^5}
<0.
\end{equation*}

Let us now suppose that \eqref{dj+ dj- 2} holds for $n-1$. 
Then, we can split 
\begin{equation*}
d_{n-1}d_n-d_0d_1=d_{n-1} (d_{n}-d_{n-2})+d_{n-1}d_{n-2}-d_0d_1.
\end{equation*}
On one hand, $d_{n-1}d_{n-2}-d_0 d_1<0$ by \eqref{dj+ dj- 2}. On the other hand, $d_{n}-d_{n-2}\leq 0$ by \cite[Theorem 2]{turan} and $d_{n-1}\geq 0$.
Thus \eqref{dj+ dj- 2} holds for all $n\geq2$.
\end{proof}

\section{Approximation by short - range potentials}
In this section we investigate the spectral relation between the electrostatic $\delta$-shell interaction on the boundary of a smooth domain and its approximation by the coupling of the Dirac operator with a short-range potential which depends on a parameter $\epsilon>0$ in such a way that it shrinks to the boundary of the domain as $\epsilon\to0$; see the definition of $H_{\lambda,\epsilon}$ below. From \cite[Theorem 1.2]{mp} we know that if $a\in\sigma(H_\lambda)$, where here $\sigma(\cdot)$ denotes the spectrum, then there exists a sequence $\seq{a_\epsilon}$ such that $a_\epsilon\in \sigma(H_{\mu,\epsilon})$ and $a_\epsilon\to a$ for $\epsilon\to0$, where  $\lambda=2\tan\left(\frac{\mu}{2}\right)$. However, the vice-versa spectral implication may not hold in general. In this section we are going to show that the reverse does hold in the spherical case, that is, if $a_\epsilon \to a$ with $a_\epsilon\in \sigma(H_{\mu,\epsilon})$, then $a\in\sigma(H_{\lambda})$ (see Theorem \ref{thm: aprox-limit} below). In particular this means that, when passing to the limit, we don't lose any element of the spectrum for electrostatic interactions with potentials shrinking on $\stwo$.

Given $\epsilon>0$ and $x\in\R^3$, we define 
\begin{equation*}
V_\epsilon(x)=\frac{1}{2\epsilon}\chi_{(1-\epsilon,1+\epsilon)}(|x|)\quad
\text{and}\quad\Vep=V_\epsilon\mathbb{I}_4,
\end{equation*} 
where $\mathbb{I}_4$ denotes the $4\times4$ identity matrix.
For $\lambda\in\R$, we also introduce the operators
\begin{align*}
{D}(\mathring H_{\lambda,\epsilon})= C^\infty_c(\Rt\setminus\seq{0})^4\quad&\text{and}\quad \mathring H_{\lambda,\epsilon}=  H+\lambda \Vep,\\
{D}(H_{\lambda,\epsilon})= H^1(\Rt)^4\quad&\text{and}\quad H_{\lambda,\epsilon}=  H+\lambda \Vep.
\end{align*}
Since $|V_\epsilon|\leq\frac{1}{2\epsilon}$,  $\mathring H_{\lambda,\epsilon}$ is essentially self-adjoint and $H_{\lambda,\epsilon}$ is self-adjoint by \cite[Theorem 4.2]{thaller}. Moreover $\sigma_{ess}(H_{\lambda,\epsilon})=\sigma_{ess}(H)=\sigma(H)=(-\infty,-m]\cup[m,+\infty)$. 
For this reason we are looking for some $a\in (-m,m)$ eigenvalue of $H_{\lambda,\epsilon}$. 

Our aim is to find a precise relation between $a$, $\lambda$ and $\epsilon$, say $R_\epsilon(a,\lambda)$, which must hold in order to get an eigenfunction for $H+\lambda V_\epsilon$ with eigenvector $a$. Then, we will take the limit of $R_\epsilon(a,\lambda)$ for $\epsilon\to 0$ and we will compare the result to \eqref{eq eigenvalue}. To do so, we use the same approach developed in \Cref{sec:delta}.

Clearly, if $\lambda=0$ we get that $H_{\lambda,\epsilon}=H$, i.e. we are not perturbing the free Hamiltonian $H$, thus we can exclude this case in our study. Assuming that $\lambda\neq 0$, we note that if $a$ is an eigenvalue of $H_{\lambda,\epsilon}$ with eigenfunction $\psi=\begin{pmatrix}
\phi\\
\chi
\end{pmatrix}$ then $-a$ is an eigenvalue of $H_{-\lambda,\epsilon}$ with eigenfunction  $\tilde{\psi}=\begin{pmatrix}
-\chi\\
\phi
\end{pmatrix}$. For this reason, from now on, we will further assume that $\lambda>0$.

Observe that $\mathring H_{\lambda,\epsilon}$ leaves the partial wave subspace $\mathcal{C}_{m_j,k_j}$ invariant. 
Its action on each subspace is represented with respect to the basis $\seq{\Phi^+_{m_j,k_j},\Phi^-_{m_j,k_J}}$ by the operator
\begin{equation}\label{def H_lambda epsilon}
\begin{split}
&{D}(\mathring h(\lambda,\epsilon)_{m_j,k_j})= C^\infty_c(0,+\infty)^2,
\\
&\mathring 
h(\lambda,\epsilon)_{m_j,k_j}
\begin{pmatrix}
  f \\
  g
\end{pmatrix}
= \left(\begin{array}{cc}
m+\frac{\lambda}{2\epsilon} \chi_{(1-\epsilon , 1+\epsilon )}& -\frac{d}{dr}+\frac{k_{j}}{r}\\
\frac{d}{dr}+\frac{k_{j}}{r} &-m+\frac{\lambda}{2\epsilon} \chi_{(1-\epsilon ,1+\epsilon )} 
\end{array}\right)
\begin{pmatrix}
  f \\
  g
\end{pmatrix}.
\end{split}
\end{equation}
Since $\mathring H_{\lambda,\epsilon}$ is essentially self-adjoint $\mathring h(\lambda,\epsilon)_{m_j,k_j}$ too, thus setting $h(\lambda,\epsilon)_{m_j,k_j}
=\overline{\mathring h(\lambda,\epsilon)_{m_j,k_j}}$ we get
\begin{equation}\label{eq:def.Hleps.H1}
{D}\left(h(\lambda,\epsilon)_{m_j,k_j}\right)
=
\big\{\varphi\in L^2(0,+\infty)^2: h(\lambda,\epsilon)_{m_j,k_j}\varphi\in L^2(0,+\infty)^2 \text{ and  }
\varphi\in AC(0,+\infty)^2\big\}
\end{equation}
and the action of $h(\lambda,\epsilon)_{m_j,k_j}$ on its domain of definition is formally given by the right hand side of the second equation in \eqref{def H_lambda epsilon}.
Moreover, $a\in (-m,m)$ is an eigenvalue for $H_{\lambda,\epsilon}$ if and only if $a$ is an eigenvalue for $h(\lambda,\epsilon)_{m_j,k_j} $ for some $\seq{m_j,k_j}$. For this reason, we want to solve 
\begin{subequations}
\begin{equation*}
\begin{split}
&\begin{cases}
(m-a)f+(-\frac{d}{dr}+\frac{k}{r})g&=0\\
(\frac{d}{dr}+\frac{k}{r})f-(m+a)g&=0
\end{cases}\quad\text{if } 0<r<1-\epsilon\text{ or } r>1+\epsilon,\\
&\begin{cases}
(m-a+\frac{\lambda}{2\epsilon})f+(-\frac{d}{dr}+\frac{k}{r})g&=0\\
(\frac{d}{dr}+\frac{k}{r})f-(m+a-\frac{\lambda}{2\epsilon})g&=0
\end{cases}\quad\text{if } 1-\epsilon<r<1+\epsilon
\end{split}
\end{equation*}
\end{subequations}
for $
\begin{pmatrix}
f\\
g
\end{pmatrix} 
\in {D}\left(h(\lambda,\epsilon)_{m_j,k_j}\right)$. 

Since $k_j=\pm(j+1/2)$, a non-trivial solution is given by 
\begin{equation}\label{eq:def:f,g.epslion.k>0}
\begin{split}
f(r)&=\begin{cases}
\begin{array}{ll}
 A \sqrt{r}\ \II_{\left(j+\frac{1}{2}\right)\pm \frac{1}{2}}(M r) & r<1-\epsilon  \\
 B_1 \sqrt{r}\ \J_{\left(j+\frac{1}{2}\right)\pm \frac{1}{2}}(L r)+B_2 \sqrt{r}\ \Y_{\left(j+\frac{1}{2}\right)\pm \frac{1}{2}}(L r) & 1-\epsilon <r<\epsilon +1 \\
 C \sqrt{r}\ \KK_{\left(j+\frac{1}{2}\right)\pm \frac{1}{2}}(M r) & r>1+\epsilon
\end{array}
\end{cases}\\
g(r)&=\begin{cases}
\begin{array}{ll}
 \frac{A M }{a+m} \sqrt{r}\ \II_{(j+1/2)\mp 1/2}(M r)& 0<r<1-\epsilon, \\
 \frac{L \sqrt{r}}{a-\frac{\lambda}{2 \epsilon}+m} \left(B_1 \ \J_{\left(j+\frac{1}{2}\right)\mp \frac{1}{2}}(L r)+B_2 \  \Y_{\left(j+\frac{1}{2}\right)\mp \frac{1}{2}}(L r)\right) & 1-\epsilon<r<1+\epsilon, \\
 -\frac{C M }{a+m}\sqrt{r}\ \KK_{(j+1/2)\mp 1/2}(M r) & r>1+\epsilon,
\end{array}
\end{cases}
\end{split}
\end{equation}
where $\J$ and $\Y$ denote the first order Bessel's functions and $\II$ and $\KK$ the second order Bessel's functions, 
\begin{equation*}
M=\sqrt{m^2-a^2},\qquad 
L=\sqrt{\left(\frac{\lambda}{2\epsilon}-a\right)^2-m^2}
\end{equation*}
and $(A,B_1,B_2,C)\neq 0$ are some constants. Note that $M\in\R$ by the assumptions on $a$, but $L$ could be complex.
Note also that $f,g\in H^1\big((0,+\infty)\setminus\seq{1-\epsilon,1+\epsilon},r\,dr\big)$. To ensure that they belong to ${D}\big(h(\lambda,\epsilon)_{m_j,k}\big)$ we have to verify that both $f$ and $g$ are continuous in $1-\epsilon$ and $1+\epsilon$, which means that the following linear system must hold:
\begin{equation}\label{eq:system}
\begin{cases}
\begin{array}{cl}
0&= 
A \sqrt{1-\epsilon }\ \II_{(j+1/2)\pm 1/2}(M(1- \epsilon) )
-B_1 \sqrt{1-\epsilon}\ {\J}_{(j+1/2)\pm1/2}(L(1- \epsilon)) \\
&\quad
- B_2 \sqrt{1-\epsilon}\ \Y_{(j+1/2)\pm1/2}(L(1- \epsilon) ),\\ \\
0&= 
A\dfrac{  \sqrt{1-\epsilon }\ M\  \II_{(j+1/2)\mp 1/2}(M(1- \epsilon) )}{a+m}
- B_1\dfrac{2 \epsilon L \sqrt{1-\epsilon}\ {\J}_{(j+1/2)\mp 1/2}(L(1- \epsilon ))}{2 a \epsilon -\lambda +2 m \epsilon }\\ 
&\quad
-B_2 \dfrac{2 \epsilon L  \sqrt{1-\epsilon}\  \Y_{(j+1/2)\mp 1/2}(L(1- \epsilon ))}{2 a \epsilon -\lambda +2 m \epsilon },\\ \\
0&=  
B_1 \sqrt{1+\epsilon}\ {\J}_{(j+1/2)\pm1/2}(L(1+\epsilon))
+B_2 \sqrt{1+\epsilon}\ {\Y}_{(j+1/2)\pm1/2}(L(1+\epsilon))\\
&\quad-
C \sqrt{1+\epsilon}\ \KK_{(j+1/2)\pm 1/2}(M(1+\epsilon)),\\  \\
0&= 
B_1 \dfrac{2\epsilon L  \sqrt{1+\epsilon}\ {\J}_{(j+1/2)\mp 1/2}(L(1+\epsilon))}{2 a \epsilon -\lambda +2 m \epsilon }+B_2\dfrac{2\epsilon  L \sqrt{1+\epsilon}\  {\Y}_{(j+1/2)\mp 1/2}(L(1+\epsilon))}{2 a \epsilon -\lambda +2 m \epsilon }\\ 
&\quad+C\dfrac{  \sqrt{1+\epsilon}\ M\ \KK_{(j+1/2)\mp 1/2}(M(1+\epsilon))}{a+m}.
\end{array}
\end{cases}
 \end{equation}
Since this is a $4\times 4$ homogeneous linear system on $A$, $B_1$, $B_2$ and $C$ and we are assuming that $(A,B_1,B_2,C)\neq 0$, we deduce that the associated matrix has null determinant. 
So, if we set
\begin{equation}\label{condition epsilon}
\begin{split}
&D^\epsilon_{j\pm1/2}(a,\lambda):=\\
&\hskip18pt\frac{2   (a+m)\ \KK_{(j+1/2)\pm1/2}\big(\sqrt{m^2-a^2}(1+\epsilon)\big)}{\epsilon (-2 a \epsilon +\lambda -2 m \epsilon )^2}\\
&\hskip30pt\times\Bigl\lbrace -2 L \epsilon  (a+m)\ \II_{(j+1/2)\pm1/2}\big(\sqrt{m^2-a^2}(1-\epsilon) \big)\\
&\hskip68pt
\times
\bigl[\ \J_{(j+1/2)\mp1/2}(L(1+\epsilon))\ \Y_{(j+1/2)\mp1/2}(L(1-\epsilon) )\\
&\hskip84pt- \J_{(j+1/2)\mp1/2}(L(1-\epsilon) )\ \Y_{(j+1/2)\mp1/2}(L(1+\epsilon))\bigr]\\
&\hskip48pt-\sqrt{m^2-a^2} (2 a \epsilon -\lambda +2 m \epsilon )\ \II_{(j+1/2)\mp 1/2}\big(\sqrt{m^2-a^2}(1-\epsilon) \big)\\ 
&\hskip68pt\times\bigl[\ \J_{(j+1/2)\pm1/2}(L(1-\epsilon) )\ \Y_{(j+1/2)\mp1/2}(L(1+\epsilon))\\
&\hskip84pt
-\J_{(j+1/2)\mp1/2}(L(1+\epsilon))\ \Y_{(j+1/2)\pm1/2}(L(1-\epsilon))\bigr]\Bigr\rbrace\\
&\hskip18pt+\frac{\sqrt{m^2-a^2}\ \KK_{(j+1/2)\mp 1/2}\big(\sqrt{m^2-a^2}(1+\epsilon)\big)}{\epsilon^2 L(2 a \epsilon -\lambda +2 m \epsilon) }\\ 
&\hskip30pt\times\Bigl\lbrace -2 L \epsilon  (a+m)\ \II_{(j+1/2)\pm1/2)}\big(\sqrt{m^2-a^2}(1-\epsilon) \big)\\ 
&\hskip68pt\times
\bigl[\ \J_{(j+1/2)\pm1/2}(L(1+\epsilon))\ \Y_{(j+1/2)\mp1/2}(L(1-\epsilon) )\\
&\hskip84pt-\J_{(j+1/2)\mp1/2}(L(1-\epsilon) )\ \Y_{(j+1/2)\pm1/2}(L(1+\epsilon))\bigr]\\
&\hskip48pt+\sqrt{m^2-a^2} (2 a \epsilon -\lambda +2 m \epsilon )\ \II_{(j+1/2)\mp1/2}\big(\sqrt{m^2-a^2}(1-\epsilon) \big) \\
&\hskip68pt\times\bigl[\ \J_{(j+1/2)\pm1/2}(L(1+\epsilon))\ \Y_{(j+1/2)\pm1/2}(L(1-\epsilon) )\\
&\hskip84pt-\J_{(j+1/2)\pm1/2}(L(1-\epsilon) ) \Y_{(j+1/2)\pm1/2}(L(1+\epsilon))\bigr]\Bigr\rbrace,
\end{split}
\end{equation}
then the determinant of the matrix associated to the linear system \eqref{eq:system} is 
$\frac{\epsilon(\epsilon^2-1)}{(a+m)^2}D^\epsilon_{j\pm1/2}(a,\lambda)$, which vanishes if and only if 
\begin{equation}\label{condition epsilon'}
D^\epsilon_{j\pm1/2}(a,\lambda)=0.
\end{equation}  
We can conclude that, given 
\begin{equation*}
\varphi=\begin{pmatrix}
f\\
g
\end{pmatrix}\quad\text{with $f$ and $g$ as in \eqref{eq:def:f,g.epslion.k>0},}
\end{equation*}
$\varphi$ is an eigenfunction for $h(\lambda,\epsilon)_{m_j,k_j}$ with eigenvalue $a$ if and only if $D^\epsilon_{j\pm1/2}(a,\lambda)=0$. This means that the function 
\begin{equation*}
\psi(x)=
\frac{1}{r}
\left(
f(r)\Phi^+_{m_j,k_j}(\hx)
+g(r)\Phi^-_{m_j,k_j}(\hx)
\right)
\end{equation*} 
is an eigenfunction for $H_{\lambda,\epsilon}$ with eigenvalue $a$. 

In order to compare \eqref{condition epsilon} and \eqref{eq:final.eigenvalue}, let us draw some pictures of these relations for some concrete values of the underlying parameters, say $m=1$, $k=1$ and $\epsilon=2^{-10}$. Figures \ref{fid:limit} and \ref{fig:delta} describe  the set of 
$(a,\lambda)\in (-1,1)\times(0,10)$ that verify \eqref{condition epsilon'} and \eqref{eq:final.eigenvalue}, respectively.
\noindent
\begin{minipage}{\linewidth}
      \centering
      \begin{minipage}{0.45\linewidth}
          \begin{figure}[H]
              \includegraphics[width=\linewidth]{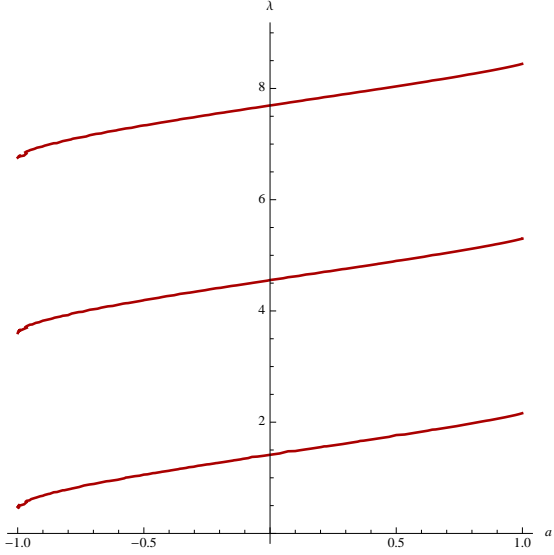}
              \caption{The set of points $(a,\lambda)$ satisfying \eqref{condition epsilon'}.}
              \label{fid:limit}
          \end{figure}
      \end{minipage}
      \hspace{0.05\linewidth}
      \begin{minipage}{0.45\linewidth}
          \begin{figure}[H]
              \includegraphics[width=\linewidth]{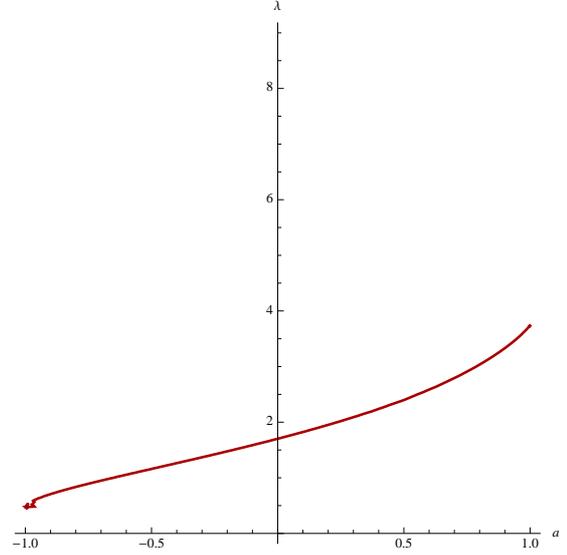}
              \caption{The set of points $(a,\lambda)$ satisfying \eqref{eq:final.eigenvalue}.}
              \label{fig:delta}
          \end{figure}
      \end{minipage}
  \end{minipage}

Looking at Figures \ref{fid:limit} and \ref{fig:delta} we note that there is no apparent relation between \eqref{condition epsilon'} and \eqref{eq:final.eigenvalue}. However, the next result proves that there is indeed a precise connection between both equations when one takes the limit $\epsilon\to0$ in $D^\epsilon_{j\pm1/2}(a,\lambda)$.
\begin{lemma}\label{lem:d_e->d}
Let $j=1/2,\,3/2,\dots$ and $D^\epsilon_{j\pm1/2}$ and $D_{j\pm1/2}$ be defined  by \eqref{condition epsilon} and \eqref{condition limit}, respectively. Then, for any $\lambda>0$,
\begin{equation*}
\lim_{\epsilon\to 0}
D^\epsilon_{j\pm1/2}(a,\lambda)=
 \frac{4(a+m)}{\lambda\pi\left(1+\tan\left(\frac{\lambda}{2}\right)^2\right)} D_{j\pm1/2}\left(a,2\tan\left(\textstyle{\frac{\lambda}{2}}\right)\right)\
\quad
\text{uniformly on }a\in (-m,m).
\end{equation*}
\end{lemma}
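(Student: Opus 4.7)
The plan is to exploit the dichotomy between the Bessel factors with fixed argument $M=\sqrt{m^2-a^2}$ and those with the large argument $L=\sqrt{(\lambda/(2\epsilon)-a)^2-m^2}$. Since $M$ is bounded uniformly for $a$ on compact subsets of $(-m,m)$, the factors $\II_\nu(M(1\pm\epsilon))$ and $\KK_\nu(M(1+\epsilon))$ converge to $\II_\nu(M)$ and $\KK_\nu(M)$ uniformly. In contrast, $L\to+\infty$ while $L\epsilon\to\lambda/2$ uniformly in $a$, so I would handle the $\J_\nu$ and $\Y_\nu$ factors through their large-argument asymptotics
\begin{equation*}
\J_\nu(z)=\sqrt{\tfrac{2}{\pi z}}\cos\!\left(z-\tfrac{\nu\pi}{2}-\tfrac{\pi}{4}\right)+O(z^{-3/2}),\qquad \Y_\nu(z)=\sqrt{\tfrac{2}{\pi z}}\sin\!\left(z-\tfrac{\nu\pi}{2}-\tfrac{\pi}{4}\right)+O(z^{-3/2}).
\end{equation*}

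Next I would simplify the four bracketed differences of Bessel products appearing in \eqref{condition epsilon} by applying $\cos A\sin B-\cos B\sin A=\sin(B-A)$ to the leading terms of these asymptotics. When the two orders coincide, one obtains
\begin{equation*}
\J_\nu(L(1{+}\epsilon))\Y_\nu(L(1{-}\epsilon))-\J_\nu(L(1{-}\epsilon))\Y_\nu(L(1{+}\epsilon))=-\tfrac{2\sin(2L\epsilon)}{\pi L\sqrt{1-\epsilon^2}}+O(\epsilon^{2}),
\end{equation*}
while when the two orders differ by $\pm 1$ the analogous bracket yields $\pm\tfrac{2\cos(2L\epsilon)}{\pi L\sqrt{1-\epsilon^2}}+O(\epsilon^{2})$, with the sign determined by which of $(j+1/2)\pm 1/2$ sits in the outer position. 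Since $L\epsilon\to\lambda/2$, these brackets converge to $\mp\frac{2\sin\lambda}{\pi L}$ and $\pm\frac{2\cos\lambda}{\pi L}$ respectively. Substituting everything into \eqref{condition epsilon}, and using $2a\epsilon-\lambda+2m\epsilon\to -\lambda$ in the prefactors, I would observe that the apparent $1/\epsilon$ and $1/\epsilon^{2}$ singularities in front of the brackets are precisely cancelled by the $1/L\sim 2\epsilon/\lambda$ arising in the asymptotics; the resulting limit is a linear combination of $\II_\pm(M)\KK_\pm(M)$, $\II_\mp(M)\KK_\mp(M)$ and the cross term $\II_\pm(M)\KK_\mp(M)+\II_\mp(M)\KK_\pm(M)$, with explicit coefficients involving $\sin\lambda$ and $\cos\lambda$.

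To conclude, I would use the Wronskian identity \eqref{eq:wronskian} to collapse the cross term to $1/M$, and then apply $\sin\lambda=2\sin(\lambda/2)\cos(\lambda/2)$ and $\cos\lambda=\cos^{2}(\lambda/2)-\sin^{2}(\lambda/2)$. Factoring out the common $4\cos^{2}(\lambda/2)=\frac{4}{1+\tan^{2}(\lambda/2)}$, the remaining expression reorganises into
\begin{equation*}
\tfrac{(2\tan(\lambda/2))^{2}}{4}-\big((m+a)\II_{(j+1/2)\pm 1/2}(M)\KK_{(j+1/2)\pm 1/2}(M)-(m-a)\II_{(j+1/2)\mp 1/2}(M)\KK_{(j+1/2)\mp 1/2}(M)\big)\cdot 2\tan(\lambda/2)-1,
\end{equation*}
which by \eqref{condition limit} is exactly $D_{j\pm1/2}(a,2\tan(\lambda/2))$, and the overall prefactor that emerges is $\frac{4(a+m)}{\lambda\pi(1+\tan^{2}(\lambda/2))}$, as claimed. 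The main obstacle is the bookkeeping of signs and of which index $(j+1/2)\pm 1/2$ appears in each position across the four bracketed differences in \eqref{condition epsilon}, together with the verification that the $O(z^{-3/2})$ remainders from the Bessel asymptotics remain negligible in the limit uniformly in $a$; this uniformity follows from the lower bound $L\geq\lambda/(4\epsilon)$ for small $\epsilon$ and the continuity of $M\mapsto\II_\nu(M),\KK_\nu(M)$ on compact subsets of $(-m,m)$.
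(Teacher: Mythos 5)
Your proposal is correct and follows essentially the same route as the paper's proof: insert the large-argument asymptotics of $\J_\nu$ and $\Y_\nu$ (you use the real-argument $O(z^{-3/2})$ form, the paper cites \cite[Eq.\ 10.7.8]{olver2010nist} with an $e^{|\Im(z)|}o(1)$ remainder — these are equivalent here since $L$ is real and bounded below by a multiple of $1/\epsilon$ once $\epsilon$ is small), let $L\epsilon\to\lambda/2$ so that the $\J$/$\Y$ brackets produce $\sin\lambda$ and $\cos\lambda$, collapse the $\II\KK$ cross terms with the Wronskian identity \eqref{eq:wronskian}, and finish with the half-angle substitution $t=2\tan(\lambda/2)$. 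The only thing you add over the paper's terse presentation is the explicit intermediate simplification of the four bracketed $\J$/$\Y$ differences into $\mp\tfrac{2\sin(2L\epsilon)}{\pi L\sqrt{1-\epsilon^2}}$ and $\pm\tfrac{2\cos(2L\epsilon)}{\pi L\sqrt{1-\epsilon^2}}$, which the paper carries out silently.
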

\begin{proof}
Note that $L\to +\infty$ uniformly in $a\in(-m,m)$ when $\epsilon\to 0$, thus we can use the asymptotics 
\begin{align*}
\J_n(z)&=\sqrt{\frac{2}{\pi z}}\left(\cos\left(z-\textstyle{\frac{1}{2}}n\pi-\textstyle{\frac{1}{4}}\pi\right)+e^{|\Im(z)|}o(1)\right)
\quad\text{for $|z|\to+\infty$},\\
\Y_n(z)&= \sqrt{\frac{2}{\pi z}}\left(\sin\left(z-\textstyle{\frac{1}{2}}n\pi-\textstyle{\frac{1}{4}}\pi\right)+e^{|\Im(z)|}o(1)\right)
\quad\text{for $|z|\to+\infty$},
\end{align*}
see \cite[Equation 10.7.8]{olver2010nist}.
Inserting these two relations in \eqref{condition epsilon} and taking $\epsilon\to 0$, we get that, uniformly on $a\in(-m,m)$,
\begin{equation*}
\begin{split}
&\lim_{\epsilon\to 0 }D^\epsilon_{j\pm 1/2}(a,\lambda)\\
&\!\quad= \frac{4}{\lambda\pi}\Bigl(M\, \II_{(j+1/2)\mp 1/2}(M) \left((a+m) \cos (\lambda) \ \KK_{(j+1/2)\pm 1/2}(M)-M \sin (\lambda)\ \KK_{(j+1/2)\mp 1/2}(M)\right)\\
&\!\qquad+(a+m)\ \II_{(j+1/2)\pm 1/2}(M) \left((a+m) \sin (\lambda)\ \KK_{(j+1/2)\pm 1/2}(M)+M \cos (\lambda)\ \KK_{(j+1/2)\mp 1/2}(M)\right)\Bigr).
\end{split}
\end{equation*}
Setting $t=2\tan\left(\frac{\lambda}{2}\right)$, we know that $\sin(\lambda)=\frac{t}{1+\frac{t^2}{4}}$ and $\cos(\lambda)=\frac{1-\frac{t^2}{4}}{1+\frac{t^2}{4}}$. Using \eqref{eq:wronskian}, hence\begin{equation*}
\begin{split}
\lim_{\epsilon\to 0 }D^\epsilon_{j\pm 1/2}(a,\lambda)=\frac{16(a+m)}{\lambda\pi(4+t^2)}\Bigl(\frac{t^2}{4}-\Big((m+a)&\ \II_{(j+1/2)\pm 1/2}(M)\ \KK_{(j+1/2)\pm 1/2}(M)\\
&-(m-a)\ \II_{(j+1/2)\mp 1/2}(M)\ \KK_{(j+1/2)\mp 1/2}(M)\Big)t-1\Bigr),
\end{split}
\end{equation*}
which coincides with \eqref{condition limit} if one replaces $\lambda$ by $t=2\tan\left(\frac{\lambda}{2}\right)$ in there.
\end{proof}

The following result resumes what we have proven so far with the aid of Lemma \ref{lem:d_e->d}.
\begin{theorem}\label{thm: aprox-limit}
Let $\mu\in\R\setminus\{0\}$ and
\begin{equation*}
\lambda=2\tan\left(\frac{\mu}{2}\right).
\end{equation*}
Let $h(\lambda)_{m_j,k_j}$ be as in \eqref{def: h(lambda)mj,kj} and, for 
$\epsilon>0$, let $h(\mu,\epsilon)_{m_j,k_j}$ be as in \eqref{eq:def.Hleps.H1}. If 
$a_\epsilon\in\sigma_{p}(h(\mu,\epsilon)_{m_j,k_j})$ and
$\lim_{\epsilon\to 0} a_\epsilon =a$ for some $a\in(-m,m)$, 
then $a\in \sigma_p(h(\lambda)_{m_j,k_j})$.
\end{theorem}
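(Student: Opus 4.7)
The approach is to pass to the limit $\epsilon\to 0$ in the explicit secular equation characterising the eigenvalues of $h(\mu,\epsilon)_{m_j,k_j}$, and to use Lemma~\ref{lem:d_e->d} to identify the limit with the analogous equation for $h(\lambda)_{m_j,k_j}$.

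First, fix the sign $\pm$ according to $k_j=\pm(j+1/2)$. The analysis leading to \eqref{condition epsilon'} shows that $a_\epsilon\in(-m,m)$ is an eigenvalue of $h(\mu,\epsilon)_{m_j,k_j}$ if and only if $D^\epsilon_{j\pm 1/2}(a_\epsilon,\mu)=0$, and the analysis leading to \eqref{eq:final.eigenvalue} shows that $a\in(-m,m)$ is an eigenvalue of $h(\lambda)_{m_j,k_j}$ if and only if $D_{j\pm 1/2}(a,\lambda)=0$. Hence it suffices to prove that the hypotheses $D^\epsilon_{j\pm 1/2}(a_\epsilon,\mu)=0$ and $a_\epsilon\to a\in(-m,m)$ force $D_{j\pm 1/2}(a,\lambda)=0$.

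Set $c_\mu(a)=\frac{4(a+m)}{\mu\pi\bigl(1+\tan\left(\frac{\mu}{2}\right)^{2}\bigr)}$ as in Lemma~\ref{lem:d_e->d}. Using $D^\epsilon_{j\pm 1/2}(a_\epsilon,\mu)=0$, the triangle inequality yields
\begin{equation*}
\begin{split}
\bigl|c_\mu(a)D_{j\pm 1/2}(a,\lambda)\bigr|
&\le \bigl|D^\epsilon_{j\pm 1/2}(a_\epsilon,\mu)-c_\mu(a_\epsilon)D_{j\pm 1/2}(a_\epsilon,\lambda)\bigr|\\
&\quad +\bigl|c_\mu(a_\epsilon)D_{j\pm 1/2}(a_\epsilon,\lambda)-c_\mu(a)D_{j\pm 1/2}(a,\lambda)\bigr|.
\end{split}
\end{equation*}
The first summand on the right is dominated by $\sup_{b\in(-m,m)}\bigl|D^\epsilon_{j\pm 1/2}(b,\mu)-c_\mu(b)D_{j\pm 1/2}(b,\lambda)\bigr|$, which tends to $0$ by the uniform convergence of Lemma~\ref{lem:d_e->d}. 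The second summand tends to $0$ because $a_\epsilon\to a\in(-m,m)$ and both $c_\mu$ and $D_{j\pm 1/2}(\cdot,\lambda)$ are continuous near $a$ (the latter being a combination of modified Bessel functions evaluated at $M=\sqrt{m^2-a^2}>0$). Consequently $c_\mu(a)D_{j\pm 1/2}(a,\lambda)=0$.

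Finally, $a\in(-m,m)$ gives $a+m>0$, and $\mu\ne 0$ with $\tan(\mu/2)$ finite makes the denominator of $c_\mu(a)$ nonzero, so $c_\mu(a)\ne 0$. Hence $D_{j\pm 1/2}(a,\lambda)=0$, which by the first step yields $a\in\sigma_p(h(\lambda)_{m_j,k_j})$. The only nontrivial point is the first summand above being absorbed by the $L^\infty(-m,m)$ bound of Lemma~\ref{lem:d_e->d}; this is immediate once one notes that $\{a_\epsilon\}\subset(-m,m)$ for small $\epsilon$. Once this observation is made the argument reduces to a continuity exercise at the level of the explicit secular function.
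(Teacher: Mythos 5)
Your proof is correct and follows the same route the paper has in mind: the paper does not write out a proof of Theorem~\ref{thm: aprox-limit}, merely stating that it ``resumes what we have proven so far with the aid of Lemma~\ref{lem:d_e->d}'', and the intended content is precisely the uniform-convergence-plus-continuity argument you supply, combined with the eigenvalue characterizations $D^\epsilon_{j\pm 1/2}(a_\epsilon,\mu)=0$ and $D_{j\pm 1/2}(a,\lambda)=0$ and the observation that $c_\mu(a)\neq 0$ for $a\in(-m,m)$. You have correctly noted the one point worth making explicit, namely that $a_\epsilon\in(-m,m)$ for small $\epsilon$ so that the secular characterization applies along the whole tail of the sequence.
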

\section{Acknowledgements}
We would like to thank Aingeru Fern\'andez-Bertol\'in for adressing us the reference \cite{turan} and for the enlightening discussions. 
Mas was supported by the {\em Juan de la Cierva} program JCI2012-14073 and the funding projects MTM2011-27739 and MTM2014-52402 from the Government of Spain. Pizzichillo was supported by the MINECO project MTM2014-53145-P, by the Basque Government through the BERC 2014-2017 program and by the Spanish Ministry of Economy and Competitiveness MINECO: BCAM Severo Ochoa accreditation SEV-2013-0323. Both authors were also supported by the ERCEA, Advanced Grant project 669689 HADE.

\end{document}